\documentclass[reqno,11pt]{amsart}

\usepackage{amssymb,latexsym}
\usepackage{amsfonts}
\usepackage{color}

\theoremstyle{plain}
\newtheorem{theorem}{Theorem}
\newtheorem{proposition}{Proposition}
\newtheorem{corollary}{Corollary}
\newtheorem{lemma}{Lemma}
\numberwithin{equation}{section}

\setlength{\textheight}{20.5truecm}
\setlength{\textwidth}{16truecm}
\marginparwidth  0truecm
\oddsidemargin   0.4truecm
\evensidemargin  0.4truecm
\marginparsep    0truecm

\newcommand{\rr}{\mathbb{R}}

\newcommand{\wh}{\widehat}
\newcommand{\p}{\partial}
\newcommand{\ee}{\varepsilon}

\newcommand{\s}{\sigma}

\newcommand{\diag}{\mbox{diag}}
\newcommand {\supp}{{\textrm{supp\,}}}

%

\begin{document}

\title[Nonuniform Dependence in Compressible Gas Dynamics]{
Nonuniform dependence on initial data for compressible gas dynamics: The Cauchy problem on $\rr^2$}
\author[J. Holmes, B. Keyfitz, F. T\i\u glay]%
{ John Holmes, Barbara Keyfitz and Feride T\i\u glay}

\date{\today}

\begin{abstract}
The Cauchy problem for the two dimensional compressible Euler equations with data in the 
Sobolev space $H^s(\mathbb R^2)$ is known to have a unique solution of the same Sobolev 
class for a short time, and the data-to-solution map is continuous.
We prove that the data-to-solution map on the plane  
is not uniformly continuous on any bounded subset of Sobolev class functions.
\end{abstract}

\subjclass[2010]{Primary: 35L45, 35L65; Secondary:  76N10}
\maketitle
 %
%
%
\section{Introduction} 
In this paper, we consider the Cauchy problem for the two-dimensional 
  compressible Euler equations with data in the Sobolev space $H^s(\mathbb R^2)$.
The problem can be written in the form 
\begin{eqnarray}
\begin{cases}
   \rho_t + \rho_0 u_x +  (\rho u)_x+\rho_0 v_y  + (\rho v)_y=0 \label{eq:classical1}\\
 u_t +uu_x+ vu_y+ h_x+ \frac{ h_0 + h}{\rho_ 0 + \rho}\rho_x=0  \\
 v_t +uv_x+ vv_y+ h_y+ \frac{ h_0 + h}{\rho_ 0 + \rho}\rho_y=0  \\
   h_t+u   h_x+ v   h_y+(\gamma -1)(  h_0 + h) (u_x+v_y)   =0  
   \end{cases}\\
   \rho|_{t=0} = \phi_1   , \quad 
 u |_{t=0} = \phi_2    , \quad 
 v|_{t=0} =\phi_3    , \quad 
   h  |_{t=0} = \phi_4  ,
\end{eqnarray} 
where $\gamma>1$, $\rho_0>0$ and $h_0>0$ are constant. 
In order to arrive at this from the standard form of the equations for ideal
compressible gas dynamics (see for example Majda, \cite[pp 3-4]{m}),
we have written the density as $\rho_0 + \rho$ and have replaced the
pressure $p$ by a multiple of the internal energy, $h_0+h=p/(\rho_0+\rho)$.
The velocity components are $u$ and $v$. 
We have also written the system in nonconservative form, as we are
considering only classical solutions in this paper.
The purpose of the constants $\rho_0$ and $h_0$ is to allow us to work
with a state variable $U=(\rho,u,v,h)$ whose components lie 
in the Sobolev space, $  H^s(\rr^2) = H^s $, defined as 
$$
H^s = \left\{ f \in \mathcal S'(\rr^2)  : 
\|  \mathcal{F}^{-1} \left( (1 + |\xi|^2 )^{s/2} \wh f  \right )\| _{L^2 (\rr^2) }  
< \infty \right\}\,.
$$ 
Pointwise restrictions on the initial data
(see discussion following the statement of Theorem \ref{thmmajda}) 
allow us to stay a positive  distance from a vacuum state.

Local in time well-posedness in the sense of Hadamard for the system 
\eqref{eq:classical1} (in $d$ space dimensions)
is well known  when $s >1+ d/2$. 
The idea of the proof goes back to G\aa rding \cite{ga}, Leray, \cite{leray},
Lax \cite{l} and Kato \cite{k}; a modern version can be found in Taylor's
monograph, \cite{taylor}.  
For a more detailed exposition of the background and for alternative
proofs, see Majda \cite{m} or Serre \cite{serre}. 
In particular, if the initial data is in the Sobolev space $H^s$, for any $s>1+d/2$, 
then there exists a unique solution for some time interval which depends upon 
the $H^s$ norm of the initial data, and the solution depends continuously on the 
initial condition. 
In addition, the solution size (in $H^s$) is bounded by twice 
the size of the initial condition for some period of time. 
Classical solutions to the compressible Euler equations do not
exist globally in time. 
Indeed, it has been shown that even for 
almost constant initial data, there is generally a critical time, $T^C$, at which the 
classical ($H^s$) solution breaks down \cite{m}. 
This breakdown is characterized by the formation of shock waves; that is, as $t \nearrow T^C$,
$$
\limsup_{t\rightarrow T^C} \| u_t \|_{L^\infty} + \| \nabla u\|_{L^\infty} = \infty. 
$$

Weak solutions for quasilinear systems in conservation form (the standard
form for \eqref{eq:classical1}) have been extensively studied in a single
space dimension, where there is a complete well-posedness theory for
data of small total variation (and in some cases small oscillation).
Excellent monographs by the originators of this theory can be found in
Bressan \cite{bressan} and Dafermos \cite{dafermos}.

An outstanding open problem in multidimensional hyperbolic conservation laws 
is to develop a theory of weak solutions for times after the formation of a shock wave. 
This is an active area of current research. 
\v Cani\' c, Keyfitz, Jegdi\' c and co-authors (for example, \cite{CKK:WRR, CShk, JKC}
and the recent \cite{JKC00}) have looked at self-similar solutions of two-dimensional
problems, as have Chen, Feldman and co-authors, \cite{ChFe,ChFe4} for example).
There is also interesting work by Shu-Xing Chen, \cite{Ch1} and other papers,
and by Elling, see \cite{Ell1} and references there.
An intriguing line of research concerns ill-posedness of multi-dimensional
problems of the type of \eqref{eq:classical1} in spaces other than $H^s$;
Rauch \cite{Rau1}, following Brenner \cite{Bren}, identified key points
of this issue, first identified by Littman \cite{Lit}; and
Dafermos \cite{Da1} and Lopes \cite{LL} have followed it up.
Yet another question, that concerns the proper definition of weak
solutions, is raised by the ``wild'' weak solutions of
De Lellis and Sz\' ekelyhidi \cite{DLS}. 
While this does not seem to bear on the question we tackle here,
which concerns classical solutions, it is worth mentioning, both as
a note about well-posedness, and as evidence of the relationship
between the compressible and incompressible gas dynamics 
equations, which we exploit in this paper.
 
The compressible Euler equations can be reduced, in the zero Mach number limit, 
to the incompressible Euler equations (see \cite{m} or \cite{mb}
for details on the asymptotic analysis), 
\begin{equation} \label{incomp}
\begin{split}
 u_t +uu_x+ vu_y  +p_x&=0  \\  
 v_t +uv_x+ vv_y +p_y&=0  \\
  u_x + v_y &= 0 ,
\end{split}\end{equation}
where $p$ is pressure.  
Global in time well-posedness is also an important question for the 
incompressible Euler equations. 
For a summary of the open questions, we refer the reader to Constantin \cite{c} 
and Fefferman \cite{f}. 
For local well-posedness and related results see Majda--Bertozzi \cite{mb}.
Because the incompressible system is not hyperbolic, analysis of the two 
problems -- \eqref{eq:classical1} and \eqref{incomp} -- has proceeded
along rather different lines.
This paper finds a rather striking connection.

A point of departure for our analysis is the proof of the non-uniform continuity of 
the data-to-solution map for the incompressible Euler equations
recently established  by Himonas and Misio\l ek \cite{hm}. 
In particular, in dimensions two and three they found solutions for 
periodic data and for Sobolev space data, for which the data-to-solution map 
was not uniformly continuous. 
In the non-periodic (full plane) case, their method used a technique of high-low 
frequency approximate solutions developed by Koch and Tzvetkov \cite{kt} for 
the one-dimensional Benjamin-Ono equation. 
Our main result is to show that, in a similar way, 
dependence on the initial data is not better than 
continuous for classical solutions of the compressible Euler equations. 
We state our result as follows. (Here we assume the standard restriction on
$s$, $s>d/2+1$.)
 
\begin{theorem}\label{intro-thm} 
For $s>2$,
the data-to-solution map for the system \eqref{eq:classical1} is not uniformly 
continuous from any bounded subset of 
$ ( H^s(\mathbb R^2)  )^4  $ to the solution space
$ C([-T, T]; ( H^s(\mathbb R^2)  )^4   ) $.  
\end{theorem}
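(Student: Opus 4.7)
The plan is to adapt the high--low frequency approximation scheme of Koch--Tzvetkov \cite{kt} and Himonas--Misio\l ek \cite{hm} to the $4\times 4$ quasilinear system \eqref{eq:classical1}. For each large integer $n$ I will construct two approximate solutions $U^{n,+}$ and $U^{n,-}$ whose initial data are $o(1)$-close in $H^s$ but whose values at some fixed positive time stay a positive distance apart in $H^s$; the local well-posedness theory recalled above will then transfer both properties to the genuine solutions.

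Concretely, for a scaling exponent $\delta\in(0,1)$ to be tuned, I write $U^{n,\pm}=U^{n,\pm}_\ell+U^{n,\pm}_h$, where the low-frequency part carries a velocity component $u^{n,\pm}_\ell(t,x,y)=\pm n^{-\delta}\chi(t,x,y)$ with $\chi$ a smooth compactly supported profile (and $\rho_\ell,v_\ell,h_\ell$ chosen so that the low-frequency block is an exact local solution of \eqref{eq:classical1} up to acceptable error), while the high-frequency part is concentrated in the transverse velocity
\[
v^{n,\pm}_h(t,x,y)=n^{-s}\phi(x,y)\cos\bigl(nx-\sigma^{n,\pm}(t,x,y)\bigr),
\]
together with correctors in $\rho$, $u$, $h$ of amplitude $O(n^{-s-\delta})$. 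Placing the $x$-oscillation into the $v$-component avoids exciting the acoustic modes at leading order, because $\partial_y v_h$ does not produce an extra factor of $n$ in the divergence $u_x+v_y$. The phase $\sigma^{n,\pm}$ is fixed by demanding that the leading $O(n^{1-s-\delta})$ contribution to the residual in the $v$-equation vanishes, which gives (to leading order) $\partial_t\sigma^{n,\pm}=\mp n^{1-\delta}\chi$, and hence a phase separation $\sigma^{n,+}-\sigma^{n,-}=O(n^{1-\delta})$ that is unbounded in $n$.

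The rest of the argument is standard in structure. First, I estimate the residual $F^{n,\pm}$ obtained by substituting $U^{n,\pm}$ into \eqref{eq:classical1} and check that $\|F^{n,\pm}\|_{L^1([-T,T];H^s)}\to 0$; this step fixes $\delta$, because every uncancelled term must carry an extra factor $n^{-\gamma}$ with $\gamma>0$ after one accounts for the $n^s$ produced by $s$-fold differentiation of $\cos(nx)$. Second, by the well-posedness theory, true solutions $U^{n,\pm}_{\text{true}}$ exist on a common interval $[-T,T]$ (for large $n$ the data sit in a fixed $H^s$-ball), and a standard difference-energy estimate combined with Gr\"onwall's inequality yields $\|U^{n,\pm}_{\text{true}}-U^{n,\pm}\|_{C([-T,T];H^s)}\to 0$. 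Third, the initial gap $\|U^{n,+}(0)-U^{n,-}(0)\|_{H^s}\lesssim n^{-\delta}\to 0$ is immediate, while the identity $\cos A-\cos B=-2\sin\tfrac{A+B}{2}\sin\tfrac{A-B}{2}$ applied to $v^{n,\pm}_h$ yields, at points where $\chi\ne 0$, a lower bound of the form $\|v^{n,+}_h(t,\cdot)-v^{n,-}_h(t,\cdot)\|_{H^s}\gtrsim |\sin(cn^{1-\delta}t)|$; choosing a sequence $t_n\to 0$ with $n^{1-\delta}t_n$ bounded away from $\pi\mathbb{Z}$ produces the required $O(1)$ separation, completing the contradiction with uniform continuity.

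The main obstacle will be the residual estimate. In the incompressible case \cite{hm} this step is streamlined by the divergence-free constraint, which eliminates several nonlinear interactions at the carrier frequency. For \eqref{eq:classical1} the quasilinear coupling to the density and internal energy equations forces nontrivial correctors in $\rho$ and $h$; matching these at order $n^{-s-\delta}$ so that every surviving term beats the worst-case $n^s$ derivative cost is the technical heart of the argument, and it is what dictates the admissible range of $\delta$ and the required lower bound $s>2$.
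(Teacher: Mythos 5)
Your high-level strategy --- a sequence of high-low frequency approximate solutions indexed by a sign $\omega=\pm1$, with bounded but closing initial data, uniform approximation of the true solutions, and an explicit trigonometric separation at later times --- is the right template and is indeed what the paper carries out. However, both your concrete construction and your error analysis diverge from the paper's in ways that matter, and one of the steps you describe as ``standard'' does not close as written.

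\smallskip
\emph{On the construction.} The paper avoids acoustic correctors altogether by taking the fast and slow velocity fields to come from stream functions, so that $u_x+v_y\equiv 0$ (Lemma~\ref{lemma-div-uv}). With $\rho^{\omega,n}=h^{\omega,n}\equiv 0$ the first and fourth equations of \eqref{eq:classical1} are satisfied exactly, and the entire residual lives in the two momentum equations \eqref{Rdef}. Your proposal instead carries the fast oscillation in $v$ (in the $x$-variable) and accepts nonzero $\rho,u,h$ correctors of amplitude $O(n^{-s-\delta})$, which you identify as the technical heart of the matter. This is a genuine additional difficulty: the density and internal-energy perturbations are driven by the divergence $u_x+v_y$, feed back into the momentum equations through the $\frac{h_0+h}{\rho_0+\rho}\nabla\rho$ terms, and the claimed $O(n^{-s-\delta})$ scaling does not drop out automatically --- it has to be derived consistently with the chosen phase, and the natural size coming from integrating the mass equation is $O(n^{-s-1+\delta})$ once the fast time-dependence of the phase is used. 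The stream-function device in the paper renders all of this moot, which is why the paper can focus entirely on the two residual components $R_2,R_3$ and on the single ``crucial cancellation'' (Lemma~\ref{crucial-cancel}).

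Relatedly, you choose a low-frequency amplitude $n^{-\delta}$ and a transport-induced phase $\sigma^{n,\pm}$ with $\partial_t\sigma^{n,\pm}\sim n^{1-\delta}$, so your two solutions separate only at times $t_n\sim n^{\delta-1}\to 0$. The paper's choice of amplitude $n^{-1}$ and phase $ny+\omega t$ produces a separation $\approx|\sin t|$ at \emph{every} fixed $t\in(0,T]$, which is both cleaner and more robust. Moreover, the paper explicitly does \emph{not} require the low-frequency block to be an exact solution; instead the residual contributions of the low-frequency piece are retained and made to cancel the dominant part of $\partial_t u_2$. Requiring an exact low-frequency solution (your approach, following \cite{hm}) is possible in principle but is precisely the step the paper set out to avoid, because it simplifies to only part of the compressible system.

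\smallskip
\emph{On the error estimate --- this is the real gap.} You assert that ``a standard difference-energy estimate combined with Gr\"onwall's inequality yields $\|U^{n,\pm}_{\text{true}}-U^{n,\pm}\|_{C([-T,T];H^s)}\to 0$.'' This does not go through if carried out directly at the $H^s$ level. When you linearize around the approximate solution, the lower-order term in the error equation is $C(U)\mathcal{E}$, where $C$ involves first derivatives of the \emph{actual} solution $U$. Bounding $\langle\Lambda^s(C(U)\mathcal{E}),\Lambda^s\mathcal{E}\rangle$ by the algebra estimate or Kato--Ponce commutator estimate at level $s$ forces a factor $\|U\|_{s+1}$, which is not controlled: the actual solution is only in $H^s$ and the only a priori information (Corollary~\ref{moresolsize}) gives $\|U\|_{\sigma+1}\lesssim n^{\sigma+1-s}$, which is bounded only for $\sigma<s-1$. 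This is exactly why the paper runs the energy estimate in $H^\sigma$ for $\max\{1,s-2\}<\sigma<s-1$ (Theorem~\ref{bigdeal}, estimate of \eqref{eq:gr1}), and recovers the $H^s$ bound by interpolating (Proposition~\ref{interp}) between $H^\sigma$ and a crude $H^\tau$ bound for $\tau>s$ (Lemma~\ref{error} and \eqref{bignorm}). You would have to do the same --- run the Gr\"onwall argument at a lower regularity (as in \cite{hm} at $L^2$, or as here in $H^\sigma$) and then interpolate --- and that step must be made explicit, since without it the claimed $H^s$ convergence of the error is not established.

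\smallskip
In short, the skeleton is right but two things are wrong or missing: the direct $H^s$ difference estimate cannot close (you must drop to lower regularity and interpolate), and the acoustic-corrector construction you sketch is harder than and qualitatively different from the paper's exactly divergence-free ansatz, which makes $\rho=h\equiv0$ and channels the whole residual into the two momentum equations where a single explicit cancellation suffices.
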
 

Our proof of non-uniform dependence of the data-to-solution map uses a  
method similar to that of \cite{hm} and \cite{kt}: construction
of high-low frequency approximate solutions. 
We formulate a different way of defining the low frequency terms. 
 In particular, Koch-Tzvetkov and Himonas-Misio\l ek 
 use an $L^2$ energy estimate, while we use an energy estimate
 in $H^\s$, $ \s < s-1$. 
 We are able to do this by sidestepping 
 the construction of some low frequency exact solutions to the compressible 
 Euler equations. 
 The strategy in this paper is to find
estimates in the $H^\s$ norm for $\s $ near $s$.
We find that the low frequency 
residual terms actually {\em help} to give the desired estimates by allowing for a crucial cancellation. 
These convenient  cancellations, obtained in our construction, simplify  technical 
difficulties created by the more complicated system of equations. 
The construction of approximate solutions and demonstration of non-uniformity
were first carried out in the
ideal compressible gas dynamics system \eqref{eq:classical1} for
periodic data.
This result is in the companion paper of Keyfitz and T\i\u glay \cite{KeTi} along with 
the description of the flow for the approximate solutions that we use.

Continuity properties of the data-to-solution map for a  variety of equations have 
been studied by   many other authors. In particular, 
the first result of this type was shown by Kato \cite{k} for Burgers' equation, 
$
u_t + (u^2)_x = 0$. 
Kato showed that the data-to-solution map is not H\"older continuous from any 
bounded subset of $H^s$ to $H^s$, when $s>3/2$.  

The  idea of using high-frequency approximate solutions has also been employed 
extensively in the context of dispersive equations.   
For example, both Christ, Colliander and Tao \cite{cct} and Kenig, Ponce and Vega 
\cite{kpv} used a similar   method of  high-frequency approximate solutions to show 
ill-posedness of some defocusing dispersive equations.   
This methodology was also adapted by  Himonas and Kenig \cite{hk} for the 
Camassa-Holm (CH) equation on the circle, and by Himonas, Kenig and Misio\l ek 
\cite{hkm} for the CH equation in the non-periodic case. 
For additional related results concerning the continuity of data-to-solution maps, 
we refer the reader to  Bona and Tzvetkov \cite{bt},  Holmes \cite{h},  
Molinet, Saut and Tzvetkov \cite{mst} and the references contained therein. 

In the next section, we give some preliminary results and notation which we 
shall use throughout our proof. 
Section \ref{three}  gives the proof of non-uniform dependence. 

%
%
\section{Preliminary Results and Notation}
This section summarizes background needed in the 
rest of the paper. 
The operator $\Lambda^s f $ is defined by the formula
\[
\wh{ \Lambda^s f } (\xi, \eta) = (1+ \xi^2 + \eta^2 )^{s/2}  \wh f (\xi, \eta)\, ,
\]
where $f$ is a test function.
Here $s$ may be any positive real number; in order to use the standard
existence theorems for classical solutions of \eqref{eq:classical1}, we 
take $s>d/2+1=2$.
The notation $\widehat{\mbox{\phantom{xx}}}$ stands for the usual Fourier transform.
 The Sobolev space $H^s    $ is a Hilbert space equipped with inner product
 and norm given by
\[
\| f\|_{s   } ^2  =   \langle  \Lambda^s f , \Lambda^s f \rangle _{L^2  }.
\]
We will frequently employ the following Sobolev embedding theorem 
(see, for instance, Taylor \cite[p 272]{taylor}).
\begin{theorem}[Sobolev embedding] \label{sob}
If $s > k + 1 $, then $H^s$ is continuously embedded into $C^k$: 
$H^s  \hookrightarrow C^k  $. 
Specifically,
\begin{align}
 H^s \subset \{ f \in C^k : D^\alpha f (x_1,x_2) \rightarrow 0 \text{ as } 
 | (x_1,x_2)| \rightarrow \infty, |\alpha | \le k \} ,
\end{align} 
and the inclusion is continuous;  for some constant $C(s,k)$ we have 
\begin{align}
\| f\|_{C^k } \dot =  \sum_{ |\alpha | \le k } \| D^\alpha  f \| _{L^\infty} \le C(s,k) \| f\|_{s}. 
\end{align} 
\end{theorem}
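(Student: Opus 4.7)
The plan is to derive the embedding directly from the Fourier-side definition of $H^s$. First I would settle the base case $k=0$: for $f \in H^s$ with $s > 1 = d/2$, Fourier inversion together with the Cauchy--Schwarz splitting
\[
|\widehat f(\xi)| = (1+|\xi|^2)^{-s/2}\cdot(1+|\xi|^2)^{s/2}|\widehat f(\xi)|
\]
yields
\[
|f(x)| \le (2\pi)^{-2}\int_{\rr^2}|\widehat f|\,d\xi \le (2\pi)^{-2}\Bigl(\int_{\rr^2}(1+|\xi|^2)^{-s}\,d\xi\Bigr)^{1/2}\|f\|_s.
\]
In polar coordinates the weight integral reduces to $2\pi\int_0^\infty (1+r^2)^{-s} r\,dr$, which is finite precisely when $s > 1$. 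This establishes $\|f\|_{L^\infty} \le C(s)\|f\|_s$, and since the same computation shows $\widehat f \in L^1(\rr^2)$, the Riemann--Lebesgue lemma yields the continuity of $f$ together with its decay at infinity.

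Next I would reduce the general $C^k$ case to the base case. The identity $\widehat{D^\alpha f}(\xi) = (i\xi)^\alpha \widehat f(\xi)$ together with the pointwise bound $|\xi^\alpha| \le (1+|\xi|^2)^{|\alpha|/2}$ gives $\|D^\alpha f\|_{s-|\alpha|} \le \|f\|_s$ for every multi-index $\alpha$. Under the hypothesis $s > k+1$ one has $s - |\alpha| > 1$ whenever $|\alpha| \le k$, so applying the base case to $D^\alpha f$ produces $\|D^\alpha f\|_{L^\infty} \le C(s,\alpha)\|f\|_s$ together with the continuity and decay of $D^\alpha f$ at infinity. Summing over $|\alpha| \le k$ then yields the claimed inequality $\|f\|_{C^k} \le C(s,k)\|f\|_s$ and the inclusion into the stated function space.

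There is no genuine obstacle here: the dimension-dependent threshold $s > k + d/2 = k+1$ enters only to ensure integrability of $(1+|\xi|^2)^{-(s-|\alpha|)}$ on $\rr^2$, and the rest is bookkeeping. I would expect the proof in Taylor \cite{taylor}, cited by the authors, to be exactly the standard Bessel-potential argument sketched above; more delicate tools such as Gagliardo--Nirenberg interpolation are unnecessary in this $L^2$-based setting.
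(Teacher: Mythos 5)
Your argument is correct and is the standard Bessel-potential proof of the $H^s(\rr^2)\hookrightarrow C^k$ embedding; the paper itself does not prove this statement but cites it from Taylor \cite{taylor}, and the Fourier-side Cauchy--Schwarz argument you give (with the reduction $\|D^\alpha f\|_{s-|\alpha|}\le\|f\|_s$ and Riemann--Lebesgue for decay) is exactly the proof found there.
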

We will also liberally employ the following    
classical product estimate (see for instance Taylor   \cite[p 66]{taylor}).
\begin{lemma}  \label{sli-algebra}
If $s > 0$ and $f,g \in L^{\infty} \cap H^s $,   we have the estimate 
\begin{align*}
\| f g \|_{s  } \le  C(s)\left[  \| f\| _{L^\infty  }  \|g\| _{s} + \| f\| _{s  }  \|g\| _{L^\infty} \right] \, .
\end{align*}
\end{lemma}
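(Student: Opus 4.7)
The plan is to prove this via a Littlewood--Paley / Bony paraproduct decomposition, which is the standard route when $s$ is real and possibly non-integer (so that the Leibniz rule alone is unavailable). Fix a smooth dyadic partition of unity $\{\psi_j\}_{j\ge 0}$ in Fourier space and denote the associated projections by $\Delta_j = \psi_j(D)$ and their partial sums by $S_j = \sum_{k<j} \Delta_k$. I will use two basic facts: the Littlewood--Paley characterization $\|u\|_s^2 \sim \sum_{j\ge 0} 2^{2js}\|\Delta_j u\|_{L^2}^2$, and the uniform bounds $\|S_j u\|_{L^\infty} + \|\Delta_j u\|_{L^\infty} \le C\|u\|_{L^\infty}$, which follow from convolution with a fixed Schwartz kernel.

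Decompose the product by Bony's formula,
\[
fg \;=\; T_f g + T_g f + R(f,g),
\]
with $T_f g := \sum_j S_{j-3} f \cdot \Delta_j g$, $T_g f := \sum_j \Delta_j f \cdot S_{j-3} g$, and $R(f,g) := \sum_{|j-k|\le 2} \Delta_j f \cdot \Delta_k g$. The summand of $T_f g$ at index $j$ has Fourier support in an annulus of radius $\sim 2^j$, so these pieces are almost $L^2$-orthogonal across scales. H\"older's inequality at each scale combined with the uniform $L^\infty$ bound on $S_{j-3}$ gives
\[
\|T_f g\|_s^2 \le C\sum_j 2^{2js}\|S_{j-3}f\|_{L^\infty}^2 \|\Delta_j g\|_{L^2}^2 \le C\|f\|_{L^\infty}^2 \|g\|_s^2,
\]
and symmetrically $\|T_g f\|_s \le C\|g\|_{L^\infty}\|f\|_s$. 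These two bounds already produce the two terms on the right-hand side of the claim, and they are in fact valid for every real $s$.

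The remaining summand $R(f,g)$ is where I expect the main obstacle, because each $\Delta_j f \cdot \Delta_k g$ with $|j-k|\le 2$ has Fourier support of diameter at most $\sim 2^j$ but can have spectrum concentrated at arbitrarily low frequencies, defeating a direct orthogonality argument. The standard fix is to apply $\Delta_\ell$ to $R(f,g)$, observe that only indices $j \gtrsim \ell$ contribute, and estimate
\[
\|\Delta_\ell R(f,g)\|_{L^2} \le C \sum_{j \gtrsim \ell} \bigl(\|\Delta_j f\|_{L^2}\|g\|_{L^\infty} + \|f\|_{L^\infty}\|\Delta_j g\|_{L^2}\bigr).
\]
Multiplying by $2^{\ell s}$, the geometric factor $2^{s(\ell - j)}$ is summable over $\ell \le j$ precisely because $s > 0$; a final Cauchy--Schwarz in $j$ then yields $\|R(f,g)\|_s \le C\bigl(\|f\|_{L^\infty}\|g\|_s + \|f\|_s\|g\|_{L^\infty}\bigr)$. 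The hypothesis $s > 0$ thus enters exactly here, to control low-frequency cancellations coming from diagonal products of high-frequency pieces. Combining the three pieces gives the lemma.
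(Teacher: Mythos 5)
The paper does not prove this lemma; it cites it as a classical product (Moser-type) estimate and refers the reader to Taylor's \emph{Partial Differential Equations III}. Your Littlewood--Paley/Bony paraproduct proof is correct and is one of the standard ways to establish this bound, so there is nothing to compare against on the paper's side. A few small remarks. For the paraproduct terms $T_f g$ and $T_g f$ your argument is exactly right, and it is worth noting (as you do) that these pieces require no sign condition on $s$. For the remainder $R(f,g)$ you correctly identify that the Fourier support of $\Delta_j f\cdot\Delta_k g$ with $|j-k|\le 2$ sits in a ball of radius $\sim 2^j$ rather than an annulus, so $\Delta_\ell R(f,g)$ picks up contributions from all $j\gtrsim\ell$, and the factor $2^{(\ell-j)s}$ must be summable over $\ell\le j$ --- this is precisely where $s>0$ enters. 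One minor simplification: in the remainder bound you do not need both summands; estimating $\|\Delta_j f\cdot\Delta_k g\|_{L^2}\le\|\Delta_j f\|_{L^2}\|\Delta_k g\|_{L^\infty}\le C\|\Delta_j f\|_{L^2}\|g\|_{L^\infty}$ alone already yields $\|R(f,g)\|_s\lesssim\|f\|_s\|g\|_{L^\infty}$, which is absorbed into the claimed right-hand side. You should also make explicit that the low-frequency block ($\ell=0$ or $\Delta_{-1}$ in many conventions) is handled trivially by $\|S_0(fg)\|_{L^2}\le\|fg\|_{L^2}\le\|f\|_{L^\infty}\|g\|_{L^2}$, but this is routine bookkeeping. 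Overall the proposal is a complete and correct proof.
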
 
This, combined with the Sobolev embedding theorem,  implies that $H^s$ is a 
Banach algebra whenever $s > 1$; in other words, for $f$, $g \in H^s$, 
the product $fg \in H^s$. 
Moreover, we have the algebra estimate
\begin{align}
\| f g \|_{s  } \le C(s) \| f\| _{s  }  \|g\| _{s}    . \label{srp}
\end{align}  

 For any test function $f$,
the commutator operator $[ \Lambda ^s , f]$ applied to a test function $g$    is 
\begin{align} \label{commdef}
[ \Lambda ^s , f] g  = \Lambda ^s (f g) - f \Lambda^s g\,. 
\end{align}  
The following commutator estimate can be found in Kato and Ponce \cite{kp}. 
\begin{theorem}[Kato-Ponce commutator estimate]
If $s \ge 0 $, and $f\in Lip\cap H^s$ and $g\in L^{\infty} \cap H^{s-1}$, then 
\begin{align} \label{KPest}
\| [ \Lambda ^s , f] g \| _{L^2}  \le C(s)\left(  \|  \p f \| _{L^\infty} \| \Lambda^{s-1} g
 \|_{L^2} + \| \Lambda^s f \|_{L^2} \|g \| _{L^\infty} \right) .
\end{align} 
\end{theorem}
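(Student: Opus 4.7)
The plan is to prove this via Bony's paraproduct decomposition coupled with Littlewood--Paley theory, which cleanly isolates the frequency interactions that produce each of the two terms on the right-hand side of \eqref{KPest}. Let $\{\Delta_j\}_{j\ge -1}$ be a dyadic Littlewood--Paley decomposition and set $S_j=\sum_{k\le j-1}\Delta_k$. For distributions $u,v$, Bony's formula reads $uv = T_u v + T_v u + R(u,v)$, with paraproduct $T_u v := \sum_j S_{j-2} u\cdot \Delta_j v$ and resonant remainder $R(u,v):=\sum_{|j-k|\le 1}\Delta_j u\cdot\Delta_k v$. Applying this to both $fg$ and $f\cdot\Lambda^s g$ and subtracting yields the three-term splitting
\[
[\Lambda^s, f]g = [\Lambda^s, T_f]g + \bigl(\Lambda^s T_g f - T_{\Lambda^s g} f\bigr) + \bigl(\Lambda^s R(f,g) - R(f,\Lambda^s g)\bigr).
\]

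The first piece is where the one-derivative gain happens and produces the first term on the right of \eqref{KPest}. Each summand $S_{j-2}f\cdot\Delta_j g$ has Fourier support in an annulus of size $\sim 2^j$, so $\Lambda^s$ behaves on it essentially like multiplication by $2^{js}$; the correction to this model is precisely what forces one derivative onto $S_{j-2}f$. Writing this out through the Fourier multiplier representation of $\Lambda^s$, applying Young's convolution inequality block by block, and summing via the almost-orthogonality of the $\Delta_j g$, one obtains
\[
\| [\Lambda^s, T_f]g\|_{L^2} \le C(s)\, \|\p f\|_{L^\infty}\, \|\Lambda^{s-1} g\|_{L^2}.
\]

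The remaining pieces must be absorbed into $\|\Lambda^s f\|_{L^2}\|g\|_{L^\infty}$. For $\Lambda^s T_g f = \sum_j \Lambda^s(S_{j-2}g\cdot\Delta_j f)$, each summand has frequency $\sim 2^j$ inherited from $\Delta_j f$, so factoring $\|g\|_{L^\infty}$ out pointwise and using the almost-orthogonality of the $\Delta_j f$ yields the required bound. The delicate term is $T_{\Lambda^s g}f$: since $g\in L^\infty$, $\Lambda^s g$ is only a tempered distribution, but the frequency restriction built into $S_{j-2}$ lets us identify $S_{j-2}(\Lambda^s g) = \Lambda^s S_{j-2} g$, whose $L^\infty$ norm Bernstein's inequality bounds by $C\cdot 2^{js}\|g\|_{L^\infty}$; transferring this factor onto $\Delta_j f$ and summing reassembles $C(s)\|\Lambda^s f\|_{L^2}\|g\|_{L^\infty}$. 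The remainder $\Lambda^s R(f,g) - R(f,\Lambda^s g)$ is controlled by the same mechanism on the high-high diagonal $j\sim k$. The main obstacle throughout is this handling of $T_{\Lambda^s g}f$, because $\Lambda^s g$ cannot be estimated in isolation; the argument succeeds only because the paraproduct's precise frequency localization allows Bernstein's inequality to convert raw frequency size $2^{js}$ into a clean $L^\infty$ bound.
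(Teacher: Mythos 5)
The paper does not prove this estimate at all: it is quoted from Kato--Ponce \cite{kp} and used as a black box, so there is no in-paper proof to compare against. Your paraproduct proof is a correct, now-standard alternative to the original argument in \cite{kp}, which works directly with Fourier multiplier symbols and a Coifman--Meyer-type boundedness theorem rather than with a Littlewood--Paley/Bony decomposition; the paraproduct route has the advantage of making visually transparent which frequency interactions produce each of the two terms on the right-hand side. Your three-term splitting of $[\Lambda^s,f]g$ is correct, the identification of $[\Lambda^s,T_f]g$ as the piece that yields $\|\partial f\|_{L^\infty}\|\Lambda^{s-1}g\|_{L^2}$ is right, and the treatment of $T_{\Lambda^s g}f$ by commuting $\Lambda^s$ past $S_{j-2}$ and then invoking Bernstein is exactly the right device for avoiding any global estimate on $\Lambda^s g$, which need not be a function. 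Two places where a full write-up would need more care: the block estimate $\|[\Lambda^s,S_{j-2}f]\Delta_j g\|_{L^2}\lesssim 2^{j(s-1)}\|\partial f\|_{L^\infty}\|\Delta_j g\|_{L^2}$ is not a consequence of Young's inequality alone --- it requires a mean-value expansion of $S_{j-2}f(x)-S_{j-2}f(y)$ paired with the first moment of the kernel of $\Lambda^s$ localized to the annulus at scale $2^j$; and the resummation of the remainder term $R$, whose blocks have Fourier support in balls rather than annuli and therefore lose $\ell^2$-almost-orthogonality, requires $s>0$ to converge (the endpoint $s=0$ is trivial because the commutator then vanishes, so this does not reduce the claimed range $s\ge 0$).
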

 In the proof of Lemma \ref{error} we need a simple interpolation estimate:
 \begin{proposition} \label{interp}
 If $u\in H^\tau$ and $\s<s<\tau$ then
 $$ 
 \big\|u\big\|_{ {s} }  \leq\big\| u\big\|_ \s^{\alpha}  
\big\| u\big\|_\tau^{\beta}, \quad \textrm{where}
\quad \alpha= \frac{ \tau -s}{\tau - \s}\,,\quad \beta =\frac{  s -\s }{\tau - \s} \,.
$$
 \end{proposition}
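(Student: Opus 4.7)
The plan is to prove the interpolation estimate on the Fourier side, since the Sobolev norms $\|u\|_s$ are defined as weighted $L^2$ norms of $\widehat u$ with weight $(1+|\xi|^2)^{s/2}$. The key algebraic observation is that the exponents $\alpha$ and $\beta$ are chosen precisely so that $\alpha + \beta = 1$ and $\sigma\alpha + \tau\beta = s$: indeed,
\[
\sigma(\tau - s) + \tau(s - \sigma) = s(\tau - \sigma),
\]
which after dividing by $\tau - \sigma$ gives $\sigma\alpha + \tau\beta = s$.

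Using Plancherel, I would first write
\[
\|u\|_s^2 = \int_{\rr^2} (1 + |\xi|^2)^s \, |\widehat u(\xi)|^2 \, d\xi
= \int_{\rr^2} \Bigl[(1 + |\xi|^2)^{\sigma} |\widehat u(\xi)|^2\Bigr]^\alpha
\Bigl[(1 + |\xi|^2)^{\tau} |\widehat u(\xi)|^2\Bigr]^\beta \, d\xi,
\]
splitting the weight according to the identity above and using $|\widehat u|^2 = |\widehat u|^{2\alpha} \cdot |\widehat u|^{2\beta}$. Then I would apply Hölder's inequality with conjugate exponents $1/\alpha$ and $1/\beta$ (legitimate since $\alpha, \beta \in (0,1)$ with $\alpha + \beta = 1$) to obtain
\[
\|u\|_s^2 \le \left(\int_{\rr^2} (1 + |\xi|^2)^{\sigma} |\widehat u|^2 \, d\xi\right)^\alpha
\left(\int_{\rr^2} (1 + |\xi|^2)^{\tau} |\widehat u|^2 \, d\xi\right)^\beta
= \|u\|_\sigma^{2\alpha} \|u\|_\tau^{2\beta}.
\]
Taking square roots gives the desired bound.

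There is essentially no obstacle here: once the exponents are verified to satisfy the two linear relations, the Hölder step is immediate. The only small point worth double-checking is that the hypothesis $\sigma < s < \tau$ makes both $\alpha$ and $\beta$ strictly positive and summing to one, so Hölder applies in the standard form. The finiteness of the right-hand side is guaranteed by $u \in H^\tau$ (which gives $u \in H^\sigma$ as well since $\sigma < \tau$).
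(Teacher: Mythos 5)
Your proof is correct and follows essentially the same route as the paper's: decompose the weight $(1+|\xi|^2)^s|\widehat u|^2$ as $\big[(1+|\xi|^2)^\sigma|\widehat u|^2\big]^\alpha\big[(1+|\xi|^2)^\tau|\widehat u|^2\big]^\beta$ and apply H\"older with exponents $1/\alpha,\,1/\beta$. You additionally spell out the exponent algebra ($\alpha+\beta=1$, $\sigma\alpha+\tau\beta=s$), which the paper leaves implicit, but the argument is identical.
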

 \begin{proof}
 We write
 $$ \big\|u\big\|_{ {s} }^2  =\int \big(1+|\xi|^2\big)^s\left|\hat{u}(\xi)\right|^2\,d\xi
 =\int\left[ \big(1+|\xi|^2\big)^{\s}\left|\hat{u}(\xi)\right|^{2}\right]^\alpha
\left[ \big(1+|\xi|^2\big)^\tau\left|\hat{u}(\xi)\right|^2\right]^\beta\,d\xi
 $$
 and apply H\"older's inequality with $p=1/\alpha$ and $q=1/\beta$.
 \end{proof}
Finally, owing to the nature of the nonlinearities in \eqref{eq:classical1}, 
we need the following reciprocal estimate.
It was proved by Kato, \cite[Lemma 2.13]{k}, for functions in ``uniformly local'' Sobolev
spaces (which generalize our construction of coefficients of the form $\rho_0 + \rho$),
and for integer values of $s>2$. We provide a sketch of the proof in the delicate case when $1<s<2$; the larger value of $s$ are straightforward. 
\begin{lemma} \label{Reciprocal estimate}
If $s>1    $,  $ h \in H^{s } (\rr^2) $, $g  \in H^{s } (\rr^2)  \cap C^1(\rr^2)$ and 
$b >0$ is a constant such that $g+b > \frac12 b$, then 
\[
\left\| \frac{h}{g+b}  \right\| _{s} \le  C(s,b ) \left(1+\|   g\| _{C^1}^s  +\|g\|^{s}_{{s } }\right)      
   \|h \|^{}_{{s } } .
\]    
\end{lemma}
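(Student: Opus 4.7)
The plan (for the delicate case $1<s<2$) is to exploit the identity $f(g+b)=h$, where $f:=h/(g+b)$, to convert the division into a product plus a commutator. Applying $\Lambda^s$, and using that it commutes with the constant $b$, gives
\[
\Lambda^s h = (g+b)\Lambda^s f + [\Lambda^s,g]f.
\]
Since $g+b \ge b/2$, I divide pointwise and take $L^2$ norms to obtain
\[
\|\Lambda^s f\|_{L^2} \le \tfrac{2}{b}\bigl(\|h\|_s + \|[\Lambda^s,g]f\|_{L^2}\bigr).
\]

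The commutator is handled by Kato--Ponce \eqref{KPest}:
\[
\|[\Lambda^s,g]f\|_{L^2} \le C(s)\bigl(\|\nabla g\|_{L^\infty}\|f\|_{s-1} + \|g\|_s\|f\|_{L^\infty}\bigr),
\]
so the two $f$-factors on the right must be converted back into $\|h\|_s$, tolerating only a strictly sub-linear contribution from $\|f\|_s$. For the $L^\infty$ norm, Sobolev embedding (Theorem~\ref{sob}), applicable because $s>1=d/2$, together with the pointwise bound $|f|\le(2/b)|h|$, gives $\|f\|_{L^\infty}\le C(b)\|h\|_s$. For the $H^{s-1}$ norm, I apply Proposition~\ref{interp} with $\sigma=0$ and $\tau=s$ to obtain
\[
\|f\|_{s-1} \le \|f\|_{L^2}^{1/s}\|f\|_s^{(s-1)/s},
\]
while $\|f\|_{L^2}\le(2/b)\|h\|_s$ is immediate.

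Assembling these estimates yields an inequality of the form
\[
\|f\|_s \le C(b)\bigl(1+\|g\|_s\bigr)\|h\|_s + C(s,b)\,\|\nabla g\|_{L^\infty}\,\|h\|_s^{1/s}\,\|f\|_s^{(s-1)/s}.
\]
The unknown $\|f\|_s$ appears on the right to the sub-unit power $(s-1)/s$, which I absorb via Young's inequality with conjugate exponents $s$ and $s/(s-1)$:
\[
\|\nabla g\|_{L^\infty}\,\|h\|_s^{1/s}\,\|f\|_s^{(s-1)/s} \le \tfrac12\|f\|_s + C(s)\,\|\nabla g\|_{L^\infty}^{s}\,\|h\|_s.
\]
Moving $\tfrac12\|f\|_s$ to the left, and using $\|g\|_s \le 1 + \|g\|_s^{s}$ together with $\|\nabla g\|_{L^\infty}\le\|g\|_{C^1}$, produces the stated bound.

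The main obstacle is that Kato--Ponce feeds the Sobolev norm of $f$ back into the estimate at the lower index $s-1<1$, where $H^{s-1}$ is not an algebra and no direct product estimate closes the loop; nor can one solve for $\|f\|_s$ circularly. The resolution is the interpolation-then-Young sandwich: Proposition~\ref{interp} trades the missing regularity in $\|f\|_{s-1}$ for a strictly sub-linear power of $\|f\|_s$ multiplied by a small factor $\|h\|_s^{1/s}$, and this is precisely the form Young's inequality can absorb.
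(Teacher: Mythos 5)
Your proof is correct and takes a genuinely different route from the paper's. The paper, for the case $s=1+\gamma$ with $\gamma\in(0,1)$, works directly with the Gagliardo (double-integral) characterization of the homogeneous $\dot H^\gamma$ seminorm: it differentiates $h/(g+b)$ once, splits the resulting seminorm integrals, and closes the estimate with a calculus inequality $\sup_{\bf y}\int |g({\bf x})-g({\bf y})|^2|{\bf x}-{\bf y}|^{-2\gamma-2}\,d{\bf x}\le C\|g\|_{C^1}^2$. You instead turn the division into an algebraic identity $\Lambda^s h=(g+b)\Lambda^s f+[\Lambda^s,g]f$, use the lower bound $g+b\ge b/2$ to pass to $\|\Lambda^s f\|_{L^2}$, handle the commutator with Kato--Ponce, and close the loop via the Proposition~\ref{interp} interpolation $\|f\|_{s-1}\le\|f\|_{L^2}^{1/s}\|f\|_s^{(s-1)/s}$ followed by Young's inequality to absorb the sublinear $\|f\|_s^{(s-1)/s}$ term. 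This is cleaner and more uniform: it treats all $s>1$ in a single stroke, rather than splitting into the fractional case $1<s<2$ and citing Kato for integer $s$, and it avoids the explicit kernel computations entirely. Two small remarks: (i) the absorption step implicitly requires knowing $\|f\|_s<\infty$ a priori, which should be justified by a standard mollification/density argument (one proves the bound for smooth $g,h$ and passes to the limit) --- the paper's direct estimate does not have this issue; (ii) note that your final bound produces $\|g\|_s$ rather than $\|g\|_s^s$, but since $x\le 1+x^s$ for $x\ge 0$, $s\ge 1$, this is consistent with the statement, as you observed.
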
    
\begin{proof} 
In the case $s = 1 + \gamma$ with $\gamma \in (0,1)$, the integer parts of the norm satisfy 
this bound as in Kato, \cite[Lemma 2.13]{k}. The fractional portion of the norm  (see 
\cite[page 155]{stein} for instance, for this form of the Sobolev norm) is 
$$  \sup_{|\alpha |=1}  \|  D^\alpha \left(h  (g+b)^{-1}  \right)   \|_{\dot  H^{\gamma} }  ^2 
 \equiv  \sup_{|\alpha |=1} \int_{\rr^4}  \frac{ | D^\alpha  \left( h(g+b)^{-1}  \right)({\bf x}) -
 D^\alpha \left( h(g+b)^{-1}  \right)({\bf y}) |^2 }{|{\bf x}-{\bf y}|^{2\gamma +2 } } 
 d{\bf x}\,d{\bf y}\,,$$
where ${\bf x}$ and ${\bf y}$ are points in $\mathbb R^2$ and $\dot  H^{\gamma} $ is 
the homogeneous Sobolev space. 
Consider $D ^\alpha= \p_1$ (the partial derivative with respect to the first component) so that 
\begin{equation}\label{parta27} 
\p_1  \left( h(g+b)^{-1} \right) =   (g+b)^{-1} \p_1 h - h (g+b)^{-2}  \p_1 g .
\end{equation}
 Estimating the first term on the right hand side of \eqref{parta27} is a straightforward 
 calculation after  breaking the integral into the following two pieces 
\begin{align}
 \|  (g+b)^{-1}  \p_1 h \|_{\dot  H^{\gamma} } ^2 \le&\,  2
 \int  \frac{ 1} { |  g ({\bf x}) +b  | ^2}  \frac{  
 |      \p_1 h  ({\bf x})  - \p_1 h({\bf y})      | ^2
 }{|{\bf x}-{\bf y}|^{2\gamma +2 } } d{\bf x}d{\bf y}
 \notag
\\ 
&+ 2 \int          \frac{|       {g({\bf x}) - g({\bf y}) }    | ^2
}{|{\bf x}-{\bf y}|^{2\gamma +2 } }   \frac{ 1   }{|g({\bf x})+b  |^2  }\,  d{\bf x}   
 \frac{ |\p_1  h({\bf y}) |^2}{|g({\bf y})+b |^2}\,d{\bf y} 
\label{int_3}\,.
\end{align} 
The first integral is bounded by an application of H\"older's inequality, while the second  
term  additionally requires the Sobolev embedding theorem and the following calculus 
estimate: 
$$
\sup_{y\in \rr^2} \int  \frac{  
 |       g({\bf x}) - g({\bf y})     |  ^2 
}{|{\bf x}-{\bf y}|^{2\gamma +2 } }\, d {\bf x}     \le C(\gamma)\|g\|_{ C^1  }^2  .
$$ 
This estimate is obtained by splitting the domain of integration into two pieces, 
$|x-y|<1$ and $|x-y|\ge1$, and then applying the mean value theorem. 
Returning to equation \eqref{parta27},
 the second term on the right hand side is  bounded by Lemma \ref{sli-algebra} and the 
 Sobolev embedding theorem:
\begin{align*}
\|h (g+b)^{-2}  \p_1 g\|_{    {\gamma} }  &\le 
\|  (g+b)^{-2}  \p_1 g\|_{   {\gamma} } \| h \|_{L^\infty}  +\|  (g+b)^{-2}  \p_1 g\|_{  L^\infty  } \| 
h \|_{  {\gamma}}  
 \\& \le 
\left( \frac{4}{b^2 }  \|    g\|_{   s   }  + \|  (g+b)^{-2}  \p_1 g\|_{\dot  H^{\gamma} }  \right) \| 
h \|_{ s }  + \frac{4}{b^2} \|     g\|_{  C^1    } \| h \|_{  {s}} .
\end{align*}  
We   bound $ \|  (g+b)^{-2}  \p_1 g\|_{\dot  H^{\gamma} } $ 
in the same way as $ \|  (g+b)^{-1}  \p_1 h \|_{\dot  H^{\gamma} } $. 
The same estimates hold for $\partial_2$. \end{proof}
%
%
\section{Proof of Nonuniform Dependence}
\label{three}
We write the compressible Euler system \eqref{eq:classical1} in the form 
\begin{align}
 U_t+A(U)U_x+B(U)U_y=0, \label{first-system}
\end{align}
with  $U= ( \rho,u,v,h)^T$
and
\[
 A(U)=\begin{pmatrix}
u &\rho_ 0 + \rho & 0 & 0 \\ 
\frac{ h_0 + h}{\rho_ 0 + \rho} & u & 0 & 1 \\
0 & 0 & u & 0 \\
0 &( \gamma-1)( h_0 + h )& 0 & u
\end{pmatrix}\,,\quad
B(U)=  \begin{pmatrix}
v & 0 &\rho_ 0 + \rho & 0 \\ 
0 & v & 0 & 0 \\
\frac{ h_0 + h}{\rho_ 0 + \rho}& 0 & v & 1 \\
0 & 0 & (\gamma-1)(h_0 + h) & v
\end{pmatrix}\, .
\] 

\subsection{Symmetrized system} \label{symmsyst}
The system \eqref{first-system} is symmetrizable; that is, it can be written as 
\[ A_0 U_t+A_1(U)U_x+B_1(U)U_y= 0\,,
\]
where the matrices $A_0, A_1, B_1$ are symmetric and 
$A_0$ is positive definite.
We can choose  
\begin{align*}  
&A_0(U) =  \begin{pmatrix}
\frac{ h_0 + h}{\rho_ 0 + \rho} & 0 & 0 & 0 \\ 
0 & \rho_ 0 + \rho & 0 & 0 \\
0 & 0 & \rho_ 0 + \rho & 0 \\
0 &0 & 0 & \frac{\rho_ 0 + \rho}{(\gamma -1 ) (h_0 + h)}
\end{pmatrix}\,;
\\
 &   
 A_1(U)=  \begin{pmatrix}
\frac{ h_0 + h}{\rho_ 0 + \rho} u & h_0 + h & 0 & 0 \\ 
h_0 + h &  ( \rho_ 0 + \rho )  u& 0 &  \rho_ 0 + \rho  \\
0 & 0 &  (\rho_ 0 + \rho)u & 0 \\
0 & \rho_ 0 + \rho & 0 & \frac{(\rho_ 0 + \rho) u}{(\gamma-1) (h_ 0 + h)}
\end{pmatrix} \,;
\\
&  
B_1(U)= \begin{pmatrix}
\frac{ h_0 + h}{\rho_ 0 + \rho} v& 0 & h_0 + h & 0 \\ 
0 & ( \rho_ 0 + \rho ) v & 0 & 0 \\
 h_0 + h & 0 & (\rho_ 0 + \rho) v & \rho_ 0 + \rho \\
0 & 0 & \rho_ 0 + \rho & \frac{(\rho_ 0 + \rho) v}{(\gamma-1) (h_ 0 + h)}
\end{pmatrix} \, .
\end{align*}

\subsection{Approximate solutions}
 
 Our strategy, following the template laid out by Himonas and Misio\l ek \cite{hm},
 is to use two sequences ($\omega=\pm 1$) of approximate solutions:
\begin{equation} \label{Udef}
U^{\omega,n}  = \begin{pmatrix}
 \rho^{\omega,n} \\ 
u^{\omega,n}\\
v^{\omega,n}\\
  h^{\omega,n}
\end{pmatrix} 
 = 
\begin{pmatrix}
 0  \\ 
u_1+ u_2 \\
v_1+v_2 \\
0
\end{pmatrix} \,.
\end{equation} 
The approximate solutions contain low frequency functions $u_1, v_1$ 
and  high frequency functions $u_2$ and $v_2$. 
(Our notation suppresses, for clarity, the dependence of the
$u_i$ and $v_i$ on $n$ and $\omega$.)
The high frequency functions are defined for a constant $\delta>0$ as 
\begin{equation} \label{defu2}
 u_2 = n^{-\delta - s - 1 } \p_y S ,
 \text{ and }
 v_2 =  -    
n^{-\delta - s - 1 } \p_ x S
 ,
\end{equation}
where
$S$ is a   stream function, given by
$$
S(x,y,t) = \psi ( n^{-\delta } x) \psi(n^{-\delta} y) \sin( n y  + \omega t),
$$
for a  compactly supported nonnegative cutoff function
  $\psi  $ which equals one on $[-2,2]$.  
Expanding $u_2$ and $v_2$ gives 
\begin{equation} \label{highfreq}
\begin{split} 
& u_2 =n^{-2\delta -s-1}\psi(n^{-\delta} x)\psi'(n^{-\delta} y) \sin( n y + 
\omega t) +  n^{-\delta -s}\psi(n^{-\delta} x)\psi(n^{-\delta} y) \cos( n y + \omega t)
\\
& v_2 =-n^{-2\delta -s-1}\psi'(n^{-\delta} x)\psi(n^{-\delta} y) \sin( n y+ \omega t) .
\end{split}
\end{equation} 
The low frequency functions, $u_1$ and $v_1$, are
\begin{equation} \label{lowfreq}
u_1 = \omega n^{-1 } \varphi _1\left( n^{-\delta} x \right )
 \varphi '_2\left( n^{-\delta} y \right )\,,  \text{ and } 
v_1 =-\omega n^{-1 } \varphi _1' \left( n^{-\delta} x \right ) 
\varphi _2\left( n^{-\delta} y \right ) ,
\end{equation}
where $\varphi _1' $ and $\varphi _2  $   are also smooth compactly supported functions;
$\varphi_1' $ is  identically 
$1$ on the support of $\psi'   $ and $\varphi_2\equiv 1$ on $\supp\psi $. 
The following cancellation holds.
\begin{lemma}\label{lemma-div-uv}  
For $u$ and $v$ defined in \eqref{Udef}, \eqref{highfreq}, and 
\eqref{lowfreq}, we have
$\p_x u^{\omega, n} + \p_y v^{\omega,n} = 0.$ 
\end{lemma}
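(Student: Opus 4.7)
The plan is to verify the cancellation by a direct computation, treating the high-frequency and low-frequency pieces separately, because each is built from a stream function.

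First I would handle the high-frequency part. Since $u_2 = n^{-\delta-s-1}\partial_y S$ and $v_2 = -n^{-\delta-s-1}\partial_x S$ with $S$ smooth, equality of mixed partials gives
\[
\partial_x u_2 + \partial_y v_2 = n^{-\delta-s-1}\bigl(\partial_x \partial_y S - \partial_y \partial_x S\bigr) = 0\, .
\]
So this piece contributes nothing and no expansion in terms of $\psi$, $\sin(ny+\omega t)$, $\cos(ny+\omega t)$ is needed.

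Next I would examine the low-frequency part. Writing $u_1 = \omega n^{-1}\varphi_1(n^{-\delta}x)\varphi_2'(n^{-\delta}y)$ and $v_1 = -\omega n^{-1}\varphi_1'(n^{-\delta}x)\varphi_2(n^{-\delta}y)$, a direct differentiation using the chain rule gives
\[
\partial_x u_1 = \omega n^{-1-\delta}\varphi_1'(n^{-\delta}x)\varphi_2'(n^{-\delta}y),\qquad \partial_y v_1 = -\omega n^{-1-\delta}\varphi_1'(n^{-\delta}x)\varphi_2'(n^{-\delta}y),
\]
so $\partial_x u_1 + \partial_y v_1 = 0$. This reflects the fact that $u_1$, $v_1$ are also of stream-function form, with stream function proportional to $\varphi_1(n^{-\delta}x)\varphi_2(n^{-\delta}y)$.

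Finally, combining the two identities yields $\partial_x u^{\omega,n} + \partial_y v^{\omega,n} = \partial_x(u_1+u_2) + \partial_y(v_1+v_2) = 0$, which is the claim. There is no real obstacle here: both blocks are divergence-free by design, and the only slightly non-tautological point is the chain-rule computation for the low-frequency part, where one should notice that the factor $n^{-\delta}$ coming from the chain rule is the same in both terms so that the two contributions exactly cancel.
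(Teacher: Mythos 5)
Your proof is correct and follows essentially the same route as the paper: both split the divergence into the low-frequency part (shown to vanish by direct chain-rule computation, or equivalently because $(u_1,v_1)$ is a stream-function pair) and the high-frequency part (shown to vanish by equality of mixed partials of $S$).
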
 
\begin{proof} 
We have
$$ u_{1,x}=\frac{\omega}{n^{1+\delta}}\phi_1'\left(\frac{x}{n^\delta} \right)
\phi_2'\left(\frac{y}{n^\delta} \right) = -v_{1,y}\,.$$
Considering the high frequency terms, we see from \eqref{defu2} 
\begin{equation*}
\p_x u _2  + \p_y v_2 = n^{-\delta - s - 1 } \p_x  \p_y S  
-n^{-\delta - s - 1 }\p_y  \p_ xS  = 0 .       
\qedhere
\end{equation*}
\end{proof}
As a result of the definition, the approximate solutions satisfy
\[ U_t^{\omega,n} +A(U^{\omega,n} )U_x^{\omega,n} +B(U^{\omega,n} )U_y^{\omega,n} =R,
\]
where 
\begin{equation} \label{Rdef}
R =
\begin{pmatrix}
0  \\ 
R_2 \\
R_3  \\
0
\end{pmatrix} =
\begin{pmatrix}
0\\
 \p_t u^{\omega, n} +  u^{\omega, n} \p_x u^{\omega, n}   
 + v^{\omega, n}  \p_y u^{\omega, n} \\
 \p_t v^{\omega, n} + u^{\omega, n}\p_x v^{\omega, n} 
 + v^{\omega, n}  \p_y v^{\omega, n}     \\0
\end{pmatrix}  \,. \end{equation}
 
 Denote the inner product of two vectors, $V$ and $ W$, by  
 $ \langle V, W\rangle = \sum \langle V_i, W_i\rangle _{L^2} $, and
for any vector $U$  denote 
\begin{align}
 \| U\|_{  \s}^2 =\langle \Lambda^{\sigma}  U , \Lambda^{\sigma}  U \rangle = 
 \| \rho \|_{\s} ^2 +  \| u \|_{\s} ^2+ \| v \|_{\s} ^2+ \| h \|_{\s} ^2.
\end{align}
 Let $U_{\omega, n}  = ( \rho _{\omega, n}  ,
  u _{\omega, n}   ,
  v _{\omega, n}   ,
  h _{\omega, n}   )^T$ be the actual solution to  the Cauchy problem 
  corresponding to equation \eqref{first-system},
  with the same data:
  $$U_{\omega,n}(x,y,0)=U^{\omega,n}(x,y,0)=
  \big(0, u_1(x,y,0)+u_2(x,y,0),v_1(x,y,0)+v_2(x,y,0),0\big)\,,$$
  again with $\omega = 1$ or $-1$. 

The actual solution is unique, and exists on a time interval 
which depends only upon the size (in the $H^s$ norm)
of the initial data and on its distance from the boundary of the
region of state space
(called $G$ in the statement below) where the system is hyperbolic.
We quote the following theorem found in \cite{m}. 
\begin{theorem}[Majda, Theorem 2.1] \label{thmmajda}
Assume $U(\cdot,0)=U_0 \in H^s$, $s>d/2+1$ and $U_0(x) \in G_1$, 
$\bar G_1  \subset\subset G$. 
Then there is a time interval $[0,T]$ with $T>0$, so that the equations 
\eqref{eq:classical1} have a unique classical solution $U  \in C([0,T]; 
(H^s)^4 ) \cap C^1 ( [ 0,T];(H^{s-1})^4  ) $, and $U(x,t)\in G_2$, $\overline G_2
\subset\subset G$,  for $(x,t) \in \rr^2 \times [0, T]$;
here $T=T(\|U_0 \|_{ s}, G_1)$. 
\end{theorem}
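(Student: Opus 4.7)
The plan is to follow the classical Kato--Lax--Majda framework for local well-posedness of quasilinear symmetric hyperbolic systems, exploiting the symmetrization established in Section \ref{symmsyst}. First I would construct approximate solutions via Picard iteration: starting from $U^{(0)}(x,t) \equiv U_0(x)$, define $U^{(k+1)}$ as the solution of the linear symmetric hyperbolic problem
\[
A_0(U^{(k)})\, U^{(k+1)}_t + A_1(U^{(k)})\, U^{(k+1)}_x + B_1(U^{(k)})\, U^{(k+1)}_y = 0, \qquad U^{(k+1)}(\cdot,0) = U_0.
\]
Existence for each linear problem can be obtained by Friedrichs mollification: solve a regularized system in a Banach space and pass to the limit using $L^2$ energy estimates, exploiting the symmetry of the coefficient matrices.

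The central step is a uniform $H^s$ bound on the iterates. Applying $\Lambda^s$ to the equation and pairing with $A_0(U^{(k)})\, \Lambda^s U^{(k+1)}$ in $L^2$, integration by parts together with the symmetry of $A_1,B_1$ cancels the top-order contributions. The resulting commutators $[\Lambda^s, A_j(U^{(k)})]\,\p_j U^{(k+1)}$ are estimated by Kato--Ponce \eqref{KPest}, while entries of $A_0, A_1, B_1$ of the form $(h_0+h)/(\rho_0+\rho)$ and its reciprocal are controlled via Lemma \ref{Reciprocal estimate} together with the algebra estimate \eqref{srp}. A Gr\"onwall argument then yields a bound of the form $\|U^{(k+1)}(t)\|_s^2 \le \|U_0\|_s^2 \exp\!\big(C\int_0^t (1+\|U^{(k)}(\tau)\|_s)^\alpha d\tau\big)$, and a bootstrap shows the iterates remain bounded by $2\|U_0\|_s$ on a common interval $[0,T]$ depending only on $\|U_0\|_s$ and the distance from $\partial G$.

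Next I would show the iterates form a Cauchy sequence in the weaker norm $C([0,T]; L^2)$. Deriving the equation satisfied by the difference $U^{(k+1)} - U^{(k)}$ and performing an $L^2$ energy estimate gives geometric convergence, with the uniform $H^s$ bound absorbing the nonlinear terms. The interpolation inequality of Proposition \ref{interp} then upgrades this to convergence in $C([0,T]; H^{\s})$ for every $\s < s$. Weak-$*$ compactness of the iterates in $L^\infty([0,T]; H^s)$, together with the equation itself, identifies the limit as a solution in $L^\infty([0,T]; H^s) \cap \mathrm{Lip}([0,T]; H^{s-1})$; a Bona--Smith type regularization argument then upgrades time-continuity to the strong $H^s$ topology, producing $U \in C([0,T]; (H^s)^4) \cap C^1([0,T]; (H^{s-1})^4)$. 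Uniqueness follows from an analogous $L^2$ energy estimate on the difference of two solutions, using the Sobolev embedding \ref{sob} to bound the $L^\infty$ norms of the coefficients.

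The main subtlety will be maintaining $U(x,t) \in G$ so that $A_0$ remains positive definite and the reciprocal bound of Lemma \ref{Reciprocal estimate} continues to apply. Since $U_0(x) \in G_1$ with $\bar G_1 \subset\subset G$, and the Sobolev embedding (Theorem \ref{sob}) together with the equation gives $\|U_t\|_{L^\infty} \le C\|U\|_{C^1}(1+\|U\|_{C^0}) \le C'\|U_0\|_s^2$ on the iteration interval, one obtains $\|U(\cdot,t) - U_0\|_{L^\infty} \le C' t\, \|U_0\|_s^2$. Shrinking $T$ so that this is smaller than the $L^\infty$-distance from $G_1$ to $\partial G_2$ keeps $U(x,t)$ inside a prescribed $G_2$ with $\bar G_1 \subset G_2 \subset\subset G$, closing the argument and pinning down the lifespan $T = T(\|U_0\|_s, G_1)$.
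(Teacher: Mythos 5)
The paper does not prove this theorem; it is quoted verbatim (as Theorem 2.1) from Majda's monograph \cite{m}, and the surrounding text explicitly says ``We quote the following theorem found in \cite{m}.'' So there is no in-paper proof to compare against. That said, your sketch is a faithful reconstruction of the classical Kato--Lax--Majda argument that one would find in \cite{m} (and in \cite{k}, \cite{taylor}): Picard iteration against the linearized symmetric hyperbolic system using the symmetrizer from Section \ref{symmsyst}, uniform $H^s$ bounds via $\Lambda^s$-energy estimates with Kato--Ponce commutators and Lemma \ref{Reciprocal estimate} for the $(h_0+h)/(\rho_0+\rho)$ coefficients, contraction in $C([0,T];L^2)$ upgraded by interpolation, a Bona--Smith regularization to recover strong $H^s$ continuity in time, $L^2$-uniqueness, and a continuity-in-$L^\infty$ argument to keep the state inside $G_2 \subset\subset G$ so that $A_0$ stays uniformly positive definite. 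These are exactly the moving parts of the standard proof, and your treatment of the $G_1 \to G_2$ confinement via the Sobolev embedding bound on $\|U_t\|_{L^\infty}$ is the right way to make the lifespan depend only on $\|U_0\|_s$ and $G_1$. The only thing to flag is that the paper itself offers no argument here, so you are not diverging from it but rather supplying the background proof the authors chose to cite rather than reproduce.
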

In our coordinate system, $G=\{\rho>-\rho_0\}$.
Having specified values for $\rho_0>0$ and $h_0$, we might choose,
for example, data to lie in a bounded set
$$ G_1=\left\{-\frac{1}{4}\rho_0 < \rho(\cdot,0)< M_\rho, |u(\cdot,0)|<
M_u,  |v(\cdot,0)|< M_u, -\frac{1}{4}h_0 < h(\cdot,0)< M_h\right\}\,,
$$
and then take $G_2$ to be
$$ G_2=\left\{-\frac{1}{2}\rho_0 < \rho(\cdot,0)< 2M_\rho, |u(\cdot,0)|<
2M_u,  |v(\cdot,0)|< M_u, -\frac{1}{2}h_0 < h(\cdot,0)< 2M_h\right\}\,,
$$
where $M_\rho$, $M_u$ and $M_h$ are positive numbers.
The significant bound, which we need throughout, is the lower bound
on $\rho$ in $G_2$.
Additionally, continuous dependence on the data
yields the following $H^s$ solution size estimate. 
\begin{theorem}[\cite{m}, Theorem 2.2] \label{solsize}
Assume $U|_{t=0} \in H^s$, $s>2$ and  $U|_{t=0} \in G_1$.  
There exists a $T^*$,  $0< T^*\le T$ such that 
$$
\sup_{t \in [0, T^*]} \| U \|_s \le 2 \| U|_{t=0}\|_{s}.
$$
\end{theorem}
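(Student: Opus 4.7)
The plan is to carry out a standard hyperbolic energy estimate in the symmetrized form $A_0 U_t + A_1(U) U_x + B_1(U) U_y = 0$ introduced in Subsection~\ref{symmsyst}, and then convert the resulting differential inequality into the claimed bound via an ODE comparison. The relevant energy functional is the equivalent norm $E(t)^2 = \langle A_0(U)\Lambda^s U, \Lambda^s U\rangle$. Because $U(x,t)\in G_2$ on $[0,T]$ by Theorem~\ref{thmmajda}, and $G_2$ stays a positive distance from the vacuum, the entries of $A_0$ are bounded above and below by positive constants, so $E(t)$ is equivalent to $\|U(t)\|_s$ uniformly on $[0,T]$.

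First I would apply $\Lambda^s$ to the symmetrized equation, take the $L^2$ inner product with $\Lambda^s U$, and compute $\tfrac{d}{dt} E(t)^2$. The principal terms $\langle A_1 \Lambda^s U_x, \Lambda^s U\rangle$ and $\langle B_1 \Lambda^s U_y, \Lambda^s U\rangle$ are handled by integration by parts using the symmetry of $A_1$ and $B_1$, producing factors $\|\partial_x A_1\|_{L^\infty} + \|\partial_y B_1\|_{L^\infty}$, which by Sobolev embedding (Theorem~\ref{sob}, since $s>2$) are controlled by $\|U\|_s$. The term $\langle (\partial_t A_0)\Lambda^s U, \Lambda^s U\rangle$ is estimated similarly after substituting $\partial_t U$ from the equation. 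The remaining contributions are commutator terms $\langle [\Lambda^s, A_1]U_x, \Lambda^s U\rangle$ and $\langle [\Lambda^s, B_1]U_y, \Lambda^s U\rangle$, which by the Kato--Ponce estimate \eqref{KPest} together with Lemma~\ref{sli-algebra} are bounded by $C\,\Phi(\|U\|_s)\,\|U\|_s^2$ for some continuous increasing function $\Phi$.

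The technical obstacle is that $A_1$ and $B_1$ contain the reciprocal coefficient $(h_0+h)/(\rho_0+\rho)$, so the $H^s$ norms of these matrix entries are not straightforward products. This is precisely where Lemma~\ref{Reciprocal estimate} is used: since $\rho_0 + \rho \ge \tfrac12 \rho_0$ on $G_2$, we obtain
\[
\left\|\tfrac{h_0+h}{\rho_0+\rho}\right\|_s \;\le\; C(s,\rho_0)\bigl(1 + \|\rho\|_{C^1}^s + \|\rho\|_s^s\bigr)\bigl(h_0\,\langle\text{const.}\rangle + \|h\|_s\bigr),
\]
and a similar bound for its $C^1$ norm via Sobolev embedding. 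Combined with the algebra estimate \eqref{srp}, all coefficient norms appearing in the commutator and transport estimates are majorized by a continuous function of $\|U\|_s$.

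Assembling these bounds yields a differential inequality
\[
\tfrac{d}{dt} E(t)^2 \;\le\; C\,\Psi\bigl(E(t)\bigr)\,E(t)^2,
\]
for a continuous increasing $\Psi$ depending only on $s$, $\rho_0$, $h_0$, and the constants defining $G_2$. Solving the companion scalar ODE $y' = C\,\Psi(y)\,y$ with $y(0) = 2 E(0)^2$ shows that $y(t) \le 4 E(0)^2$ up to some time $T_0 = T_0(E(0))>0$. Setting $T^* = \min\{T, T_0\}$ and using the equivalence $E \sim \|U\|_s$ gives $\sup_{[0,T^*]}\|U\|_s \le 2\|U|_{t=0}\|_s$, as claimed. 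The main obstacle is really bookkeeping rather than a single hard step: making sure every nonlinear coefficient, including the reciprocal factor, is estimated in a way that closes the Grönwall-type argument in $H^s$ rather than leaking an extra derivative.
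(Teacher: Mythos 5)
The paper does not prove this theorem; it is quoted verbatim from Majda's monograph \cite{m} (his Theorem 2.2) and used as a black box, so there is no ``paper proof'' to compare against. Your sketch is, in fact, essentially the argument Majda gives: symmetrize, form the weighted energy $E(t)^2=\langle A_0(U)\Lambda^s U,\Lambda^s U\rangle$, integrate by parts against the symmetric matrices, absorb the commutators via Kato--Ponce, control the reciprocal coefficient $(h_0+h)/(\rho_0+\rho)$ using the $G_2$ lower bound and a reciprocal estimate of the type of Lemma~\ref{Reciprocal estimate}, and close with a Gr\"onwall/ODE comparison. So the method is the standard one and matches the cited source.

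Two small points you should tighten. First, the ODE comparison should start from $y(0)=E(0)^2$, not $y(0)=2E(0)^2$; the factor $2$ is what you are trying to extract, not an input. Second, and more substantively, passing from $E(t)\le 2E(0)$ to $\|U(t)\|_s\le 2\|U(0)\|_s$ is not immediate because the equivalence $c_1\|U\|_s\le E\le c_2\|U\|_s$ has constants $c_1<c_2$ determined by the $G_2$ bounds on $A_0$; this route only yields $\|U(t)\|_s\le 2(c_2/c_1)\|U(0)\|_s$. The clean fix is to note that $t\mapsto\|U(t)\|_s$ is continuous (as $U\in C([0,T];H^s)$), so the set $\{t:\|U\|_s\le 2\|U(0)\|_s\}$ contains an interval $[0,T^*]$, and the energy estimate then gives a \emph{lower bound} on $T^*$ depending only on $\|U(0)\|_s$ and $G_1$, which is the actual content of the theorem. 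With that bootstrap in place your argument is complete.
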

In what follows, we take $T^*$ to be the value given by this theorem.

We obtain the proof of Theorem \ref{intro-thm} by
 showing the following properties of the corresponding solutions:
\begin{enumerate}
\item \label{bdd}
Boundedness of initial data (proved in Section \ref{bddcvge}): 
\begin{align}
\|U^{\omega,n}(\cdot,0)\|_s=\|U_{\omega,n}(\cdot,0)\|_s= 
\| ( \rho _{\omega, n} (0) , u _{\omega, n}(0),
 v _{\omega, n} (0), h _{\omega, n} (0)  )  \|_{s }  \le C ,
\end{align}
uniformly in $n$.
\item \label{cvge}
Convergence of initial data (Section \ref{bddcvge}): 
for $\delta<1/2$,
\begin{align}
&\lim_{n\rightarrow \infty} \| U_{1,n}(\cdot,0)- U_{-1,n}(\cdot,0 ) \|_{s } = 0.
\end{align}

\item \label{app}
Uniformity of approximation of $U^{\omega,n}$ to actual solution
$U_{\omega,n}$ (Section \ref{uniform}):
\begin{align} \label{eqapp}
\| U_{\omega,n}(\cdot,t) -U^{\omega,n}(\cdot,t) \|_{s }  
\le C n^{-\ee} , \quad    \quad 0<t< T^*\,,
\end{align}
for some $\ee>0$.
\item  \label{nu}
Non-uniformity of divergence of $U_{1,n}$ and $U_{-1,n}$ from each
other in time (Section \ref{noncvge}):
\begin{align}
&\|U_{1,n}(\cdot,t)-U_{-1,n}(\cdot,t  ) \|_{ s} > |\sin(t)|, \quad    \quad 0<t.
\end{align}
  \end{enumerate}
The following estimates  can be found in the appendix of \cite{hm}. 
\begin{lemma} \label{hs-estimates} 
Let $\s \ge 0 $, $\delta  >0$ and $n \gg 1$. 
For any Schwarz function $\psi \in \mathcal{S}(\rr)$ we have 
\begin{align} 
\| \psi ( n^{-\delta} \cdot) \|_{H^\s(\rr) } \le  n^{\delta/2} \| \psi    \|_{H^\s (\rr) } .
\end{align}
For any constant $a \in \rr$ we have 
\begin{align} \label{cos_estimate} 
\| \psi ( n^{-\delta} \cdot)  \sin(n \cdot + a) \|_{H^\s(\rr) } + 
\| \psi ( n^{-\delta} \cdot)  \cos(n \cdot + a) \|_{H^\s(\rr) } \approx n^{\s + \delta/2} \| \psi    \|_{L^2 (\rr)} .
\end{align}
\end{lemma}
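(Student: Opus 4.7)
The plan is to work entirely on the Fourier side, using Plancherel's theorem together with the fact that multiplication by a trigonometric factor $e^{\pm inx}$ translates the Fourier transform by $\pm n$. Throughout, I will write $\tilde{\psi}(x)=\psi(n^{-\delta}x)$ and use $\widehat{\tilde{\psi}}(\xi)=n^{\delta}\hat{\psi}(n^{\delta}\xi)$.

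For the first estimate, I would just compute directly:
\[
\|\tilde{\psi}\|_{H^{\sigma}}^2=\int (1+\xi^2)^{\sigma}n^{2\delta}\bigl|\hat{\psi}(n^{\delta}\xi)\bigr|^2\,d\xi
=n^{\delta}\int\bigl(1+n^{-2\delta}\eta^2\bigr)^{\sigma}\bigl|\hat{\psi}(\eta)\bigr|^2\,d\eta
\]
after substituting $\eta=n^{\delta}\xi$. For $n\ge 1$ and $\sigma\ge 0$ the weight $(1+n^{-2\delta}\eta^2)^{\sigma}$ is bounded by $(1+\eta^2)^{\sigma}$, which immediately yields the claimed bound $\|\tilde{\psi}\|_{H^{\sigma}}\le n^{\delta/2}\|\psi\|_{H^{\sigma}}$.

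For the second estimate, I would write $\cos(ny+a)=\frac{1}{2}(e^{i(ny+a)}+e^{-i(ny+a)})$ so that
\[
\widehat{\tilde{\psi}\cos(n\cdot+a)}(\xi)=\tfrac{n^{\delta}}{2}\Bigl[e^{ia}\hat{\psi}(n^{\delta}(\xi-n))+e^{-ia}\hat{\psi}(n^{\delta}(\xi+n))\Bigr],
\]
and similarly for sine. Substituting $\eta=n^{\delta}(\xi\mp n)$ in each piece, the $H^{\sigma}$ norm squared becomes
\[
\tfrac{n^{\delta}}{4}\int\bigl(1+(n+n^{-\delta}\eta)^2\bigr)^{\sigma}\bigl|\hat{\psi}(\eta)\bigr|^2\,d\eta
\]
plus a symmetric piece centered at $-n$ and a cross term. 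On the support (or effective support) of $\hat{\psi}$ the argument $n+n^{-\delta}\eta$ is of size $n$, so the weight is equivalent to $n^{2\sigma}$, producing a contribution of order $n^{2\sigma+\delta}\|\psi\|_{L^2}^2$. Summing the two ``diagonal'' pieces and taking square roots gives the claim.

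The main technical point, and the only step that is not a one-line computation, is showing that the two translated Gaussians-like profiles $\hat{\psi}(n^{\delta}(\xi-n))$ and $\hat{\psi}(n^{\delta}(\xi+n))$ give essentially decoupled contributions (so the cross term is negligible) and that the weight $(1+\xi^2)^{\sigma}$ really is comparable to $n^{2\sigma}$ on the support of each. Since $\psi$ is Schwartz, $\hat{\psi}$ is Schwartz, so for $n\gg 1$ the shifted profile is concentrated in a window of width $O(n^{-\delta})$ around $\pm n$, which is much smaller than $n$; outside a window like $|\xi\mp n|\le n/2$ the Schwartz decay of $\hat{\psi}(n^{\delta}(\xi\mp n))$ produces arbitrarily rapid decay in $n$, contributing only lower-order terms to the integral. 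This justifies both the equivalence of the weight to $n^{2\sigma}$ and the smallness of the cross term, yielding the two-sided estimate $\approx n^{\sigma+\delta/2}\|\psi\|_{L^2}$.
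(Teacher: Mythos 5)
The paper does not give its own proof of this lemma; it simply cites the appendix of Himonas--Misio\l ek \cite{hm}. Your Fourier-side computation is the natural one and is almost certainly what that reference does. Both of your estimates are correct: the scaling law $\widehat{\tilde{\psi}}(\xi)=n^{\delta}\hat{\psi}(n^{\delta}\xi)$ and the substitution $\eta=n^{\delta}\xi$ give the first bound immediately, using that $(1+n^{-2\delta}\eta^2)^{\sigma}\leq(1+\eta^2)^{\sigma}$ for $n\geq 1$, $\sigma\geq 0$. For the second, writing the trigonometric factor as a sum of exponentials and observing that multiplication by $e^{\pm in\cdot}$ translates the Fourier transform by $\pm n$ is exactly right, and your identification of the one nontrivial point --- showing the weight $(1+\xi^2)^{\sigma}$ is comparable to $n^{2\sigma}$ where the translated profiles live, and that the cross term is negligible --- is correct and correctly resolved by the Schwartz decay of $\hat{\psi}$.

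If you wanted to make the sketch fully watertight, the two places to flesh out are: (i) the two-sided comparison should be split explicitly into $|\eta|\leq n^{1+\delta}/2$ (where $|n+n^{-\delta}\eta|\in[n/2,3n/2]$, so the weight is between $c\,n^{2\sigma}$ and $C\,n^{2\sigma}$) and its complement (where the rapid decay $|\hat{\psi}(\eta)|\lesssim_N(1+|\eta|)^{-N}$ makes the contribution $O(n^{-M})$ for any $M$, after absorbing the at-most-polynomial growth of the weight); and (ii) the cross term is controlled the same way, since on the window where $\hat{\psi}(n^{\delta}(\xi-n))$ is nonnegligible, the second factor $\hat{\psi}(n^{\delta}(\xi+n))$ has argument of size $\sim n^{1+\delta}$ and hence decays faster than any power. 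Neither point changes the substance of your argument.
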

The notation $\approx$ means that the expression on the left is
bounded above and below by constants
independent of $\s$, $\delta$ and $n$.
Note that the $L^2$ bound implies an $H^\s$ bound.
From this lemma we obtain bounds on the approximate solutions:
\begin{lemma} \label{u0bds}
For $s-2<\s< s-1$ and $0<\delta < 1$, we have
$$ \|U^{\omega,n}\|_{\s+1}\leq C n^{\s - s+1}\,,$$
where $C$ depends on the norms of the functions $\psi$, $\phi_1$ and $\phi_2$.
\end{lemma}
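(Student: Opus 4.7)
The approximate solution $U^{\omega,n}$ has vanishing density and enthalpy components, so the vector norm reduces to $\|U^{\omega,n}\|_{\sigma+1}^2 = \|u_1+u_2\|_{\sigma+1}^2 + \|v_1+v_2\|_{\sigma+1}^2$, and by the triangle inequality it suffices to bound each of the four scalar norms $\|u_i\|_{\sigma+1}$ and $\|v_i\|_{\sigma+1}$ for $i=1,2$ by $Cn^{\sigma-s+1}$. Each of these four functions is, by construction, a tensor product $F(x)G(y)$ of functions of one variable; using the pointwise inequality $(1+\xi^2+\eta^2)^{\sigma+1}\leq (1+\xi^2)^{\sigma+1}(1+\eta^2)^{\sigma+1}$ combined with Plancherel, we obtain the decoupled bound $\|F\otimes G\|_{H^{\sigma+1}(\mathbb{R}^2)} \leq \|F\|_{H^{\sigma+1}(\mathbb{R})}\|G\|_{H^{\sigma+1}(\mathbb{R})}$, reducing the problem to the one-dimensional estimates of Lemma \ref{hs-estimates}.

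For the high-frequency contributions, each non-oscillating factor of the form $\psi(n^{-\delta}\cdot)$ or $\psi'(n^{-\delta}\cdot)$ obeys a 1D bound $\lesssim n^{\delta/2}$, while each oscillating factor $\psi(n^{-\delta}\cdot)\sin(n\cdot+\omega t)$ or its cosine counterpart obeys the sharper bound $\approx n^{\sigma+1+\delta/2}$. The dominant term is the second summand of $u_2$ in \eqref{highfreq}: its prefactor $n^{-\delta-s}$ multiplies $n^{\delta/2}\cdot n^{\sigma+1+\delta/2} = n^{\sigma+1+\delta}$ to give exactly $n^{\sigma-s+1}$. The remaining high-frequency pieces (the first summand of $u_2$, and $v_2$) carry the heavier prefactor $n^{-2\delta-s-1}$, and hence contribute only $O(n^{\sigma-s-\delta})$, subdominant for $\delta>0$.

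For the low-frequency pieces $u_1$ and $v_1$, both 1D factors are non-oscillating rescaled smooth cutoffs, each contributing $\lesssim n^{\delta/2}$, so together with the $n^{-1}$ prefactor from \eqref{lowfreq} we obtain $\|u_1\|_{\sigma+1},\|v_1\|_{\sigma+1} \lesssim n^{\delta-1}$. Absorbing this into the target bound $n^{\sigma-s+1}$ requires $\delta-1 \leq \sigma-s+1$, i.e.\ $\delta \leq \sigma-s+2$; this right-hand side is positive since $\sigma>s-2$, and the constraint is respected by the operative range of $\delta$ (e.g.\ $\delta<1/2$, which appears explicitly in item \ref{cvge}). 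Summing the four scalar bounds yields the claimed estimate. There is no substantive obstacle here --- the argument is essentially bookkeeping of exponents; the only informative point is that the prefactors in \eqref{highfreq} are precisely calibrated so that the sharp oscillation exponent $n^{\sigma+1+\delta/2}$ produces exactly the power $n^{\sigma-s+1}$.
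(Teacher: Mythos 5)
Your proof is correct and follows essentially the same route as the paper: split $U^{\omega,n}$ into the four scalar components, exploit the tensor-product structure to reduce to the one-dimensional estimates of Lemma~\ref{hs-estimates}, identify the second summand of $u_2$ in \eqref{highfreq} as the dominant term yielding exactly $n^{\sigma-s+1}$, and check the remaining pieces are subdominant. Your justification of $\|F\otimes G\|_{H^{\sigma+1}(\mathbb{R}^2)}\leq\|F\|_{H^{\sigma+1}(\mathbb{R})}\|G\|_{H^{\sigma+1}(\mathbb{R})}$ via the pointwise inequality and Plancherel is a welcome clarification (the paper writes that step with an equals sign, which should be an inequality). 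One small imprecision: you remark that $\delta<1/2$ guarantees the low-frequency constraint $\delta\leq\sigma-s+2$; that is not automatic when $\sigma$ is close to $s-2$. The paper says only that the constraint is satisfiable for some $\delta>0$, and what is actually used downstream is the choice $\sigma>s-3/2+\delta$ in Lemma~\ref{error}, which does enforce $\delta<\sigma-s+2$.
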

\begin{proof}
The nonzero terms in $U^{\omega,n}$ are $u_1$, $u_2$, $v_1$ and $v_2$.
Since $u_1$ and $v_1$ are products (in $x$ and $y$) of terms of the form $\psi(n^{-\delta}
\cdot)$, we have, from Lemma \ref{hs-estimates}, and with $C$ a generic 
constant,
$$ \|u_1\|_{\s+1}=n^{-1}\|\phi_1(n^{-\delta} x)\|_{\s+1} \|\phi_2'(n^{-\delta} y)\| _{\s+1}
\leq n^{-1}n^{\delta/2}\|\phi_1\|_{\s+1}n^{\delta/2}\|\phi'_2\|_{\s+1}=Cn^{-1+\delta}\,.
$$
A similar bound holds for $v_1$, which has the same structure.
Note that these bounds are valid for any $\s$.
On the other hand,
\begin{align*} 
\|u_2\|_{\s+1} \leq &n^{-2\delta -s-1}\|\psi(n^{-\delta} x)\|_{\s+1}
\|\psi'(n^{-\delta} y) \sin( n y + \omega t)\|_{\s+1}\\ 
&\phantom{word} +  n^{-\delta -s}
\|\psi(n^{-\delta} x)\|_{\s+1}\|\psi(n^{-\delta} y) \cos( n y + \omega t)\|_{\s+1}\\
\leq &n^{-2\delta -s-1}n^{\delta/2}\|\psi\|_{\s+1}n^{\s+1+\delta/2}\|\psi'\|_{\s+1}
+ n^{-\delta -s}
n^{\delta/2}\|\psi\|_{\s+1}n^{\s+1+\delta/2}\|\psi\|_{\s+1}\\
\leq& Cn^{-\delta +\s -s}+Cn^{\s-s+1}\,,
\end{align*}
while $v_2$, which has the structure of the first term in $u_2$, satisfies
$$ \|v_2\|_{\s+1}\leq Cn^{-\delta +\s -s}\,.$$
Now, $\delta$ is a positive number and $\s<s-1$, so all the exponents of
$n$ are negative if $\delta < 1$.
To bound the low frequency terms by the high frequency terms we need
$-1+\delta < \s - s +1$, or $\delta < \s-s+2$, and provided $\s>s-2$, 
as we have assumed, it is possible to achieve this with $\delta>0$.
\end{proof}
Lemma \ref{u0bds} implies a bound on the actual solution, using Theorem
\ref{solsize}.
\begin{corollary} \label{moresolsize}
If $t\leq T^*$, then
$$ \|U_{\omega,n}\|_{\s+1}\leq C n^{\s + 1-s} $$
where $T^*$, as in Theorem \ref{solsize}, 
is the time to doubling of the initial norm, and $\s< s-1$.
\end{corollary}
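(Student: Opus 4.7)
The plan is to upgrade the bound of Lemma \ref{u0bds} from the approximate solution $U^{\omega,n}$ to the actual solution $U_{\omega,n}$ by performing an energy estimate at regularity $\s+1$, which is strictly below the regularity $s$ at which well-posedness is known. Because $\s+1<s$, all coefficient norms that appear on the right-hand side of that energy estimate are already controlled by the $H^{s}$ bound furnished by Theorem~\ref{solsize}, so nothing beyond existing tools is needed.

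First I would observe that $U_{\omega,n}(\cdot,0)=U^{\omega,n}(\cdot,0)$, so Lemma~\ref{u0bds} gives $\|U_{\omega,n}(\cdot,0)\|_{\s+1}\le Cn^{\s+1-s}$ at time zero. Next, I would apply $\Lambda^{\s+1}$ to the symmetric system of Section~\ref{symmsyst} and pair with $\Lambda^{\s+1}U_{\omega,n}$ in the weighted inner product defined by the symmetrizer $A_{0}$. The standard integration-by-parts argument, exploiting symmetry of $A_{1},B_{1}$ and positive-definiteness of $A_{0}$, converts every term in which $\Lambda^{\s+1}$ falls on a coefficient into a commutator controlled by the Kato--Ponce estimate \eqref{KPest}. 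The entries of $A_{0},A_{1},B_{1}$ with the reciprocal form $(h_{0}+h)/(\rho_{0}+\rho)$ are bounded in $H^{s}$ by Lemma~\ref{Reciprocal estimate}, using the strict lower bound on $\rho_{0}+\rho$ supplied by the invariant region $G_{2}$ from Theorem~\ref{thmmajda}. The net result is a differential inequality
\[
\frac{d}{dt}\|U_{\omega,n}\|_{\s+1}^{2}\le C\bigl(\|U_{\omega,n}\|_{s}\bigr)\,\|U_{\omega,n}\|_{\s+1}^{2}.
\]

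By Theorem~\ref{solsize}, $\|U_{\omega,n}(\cdot,t)\|_{s}\le 2\|U_{\omega,n}(\cdot,0)\|_{s}$ is uniformly bounded on $[0,T^{*}]$, so $C(\|U_{\omega,n}\|_{s})$ is bounded by a constant depending only on $T^{*}$ and the data. Gronwall's inequality, together with the initial bound above, then gives $\|U_{\omega,n}(\cdot,t)\|_{\s+1}\le C'n^{\s+1-s}$ on $[0,T^{*}]$, as claimed. The only genuinely technical point is the bookkeeping for the commutators and products generated by the reciprocal coefficients $1/(\rho_{0}+\rho)$; once it is checked that Lemma~\ref{Reciprocal estimate} applies to every such term and that moving $\Lambda^{\s+1}$ through $A_{0}$ produces only commutators of Kato--Ponce type, the argument reduces to the standard symmetric-hyperbolic energy estimate plus Gronwall.
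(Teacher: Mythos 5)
Your proof is correct, and it fills in the details behind what the paper dispatches in one sentence. The paper's own argument is simply to invoke Theorem~\ref{solsize} a second time at the lower regularity level $\s+1$ (which is admissible because $\s+1>2$): since $U_{\omega,n}(\cdot,0)=U^{\omega,n}(\cdot,0)$, Lemma~\ref{u0bds} bounds $\|U_{\omega,n}(\cdot,0)\|_{\s+1}\leq Cn^{\s+1-s}$, and the doubling estimate at level $\s+1$ then propagates that bound in time. Your argument is different in kind: rather than citing the doubling theorem at regularity $\s+1$ as a black box, you perform a fresh symmetric-hyperbolic energy estimate at level $\s+1$ in which the coefficient norms are controlled by the already-established $H^s$ bound (Theorem~\ref{solsize} at level $s$), then close with Gr\"onwall over $[0,T^*]$. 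This buys you something: it avoids the slight gap in the paper's reading, namely that Theorem~\ref{solsize} applied at level $\s+1$ produces its own doubling time $T^{**}$ which is not a priori guaranteed to be at least $T^*$; your estimate runs directly on $[0,T^*]$ using the level-$s$ control. The price is that you redo the standard commutator bookkeeping (Kato--Ponce for $[\Lambda^{\s+1},\tilde A_j]$ after splitting off the constant parts $A_j(0)$, since the full matrices are not in $L^2(\rr^2)$, plus Lemma~\ref{Reciprocal estimate} for the $(h_0+h)/(\rho_0+\rho)$ entries, using the lower bound on $\rho_0+\rho$ from $G_2$), none of which the paper spells out. Both routes lead to the same conclusion; yours is longer but self-contained and slightly more careful about the time interval.
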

\subsection{ (\ref{bdd}) Boundedness and  (\ref{cvge})
Convergence of the Initial Data} \label{bddcvge}
 From Lemma \ref{hs-estimates}, we have $\|U^{\omega,n}(\cdot,0)\|_s\leq 
 Cn^{-1+ \delta} + C$.
For any $\delta$ with $0<\delta<1$ 
 we have $\|U^{\omega,n}(0)\|$ bounded, uniformly in $n$. 
 
 To see that the difference in the initial data for $\omega=\pm 1$ converges 
 to zero in $H^s$,
 we calculate $U^{1,n}-U^{-1,n}$ at $t=0$, noting that the oscillatory terms
 cancel at $t=0$, leaving only
$$
\begin{pmatrix}
 \rho_{1,n}  (0) - \rho_{-1,n} (0) \\
 u_{1,n} (0) - u_{-1,n}  (0)\\
v_{1,n} (0) - v_{-1,n}  (0)\\
h_{1,n} (0) - h_{-1,n}  (0)
\end{pmatrix} 
= 
\begin{pmatrix}
0\\
 2 n^{-1    } \varphi _1\left( n^{-\delta} x \right ) \varphi _2 ' \left( n^{-\delta} y \right )  \\
2n^{-1  } \varphi _1'\left( n^{-\delta} x \right ) \varphi _2\left( n^{-\delta} y \right ) \\
 0
\end{pmatrix}.
$$
This tends to zero in $H^s$ by the 
first estimate in Lemma \ref{hs-estimates}, for any $\delta\in(0,1)$,
as in  the first estimate in the proof of Lemma \ref{u0bds}. 

\subsection{(\ref{app}) Uniformity of the Approximation }  \label{uniform}
 \label{section3.4} 
 In this subsection,   we denote the actual solutions by $U$. 
  Let $\mathcal{E}  = U  - U^{\omega ,n}  = ( E, F, G, H)^T$
   be the error,
   the difference between the actual and approximate solutions. 
The main result of this section is  
\begin{theorem} \label{bigdeal}
For  $\max\{1,s-2\} < \s < s-1$, $\mathcal E$ satisfies  
$$
\frac{d}{dt} \| \mathcal{E}   \|_{\s}   \lesssim n^{\s+1-s}  \| \mathcal{E}  \|_\s  + n^{ \delta-2 } .
$$
Furthermore, we have on the time interval of existence
$$
\| \mathcal{E}  \|_{\s} \lesssim n^{  \delta-3 + s - \s }.
$$
\end{theorem}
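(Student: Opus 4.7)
The plan is to run a weighted $H^\sigma$ energy estimate on the error equation using the symmetrization of \S\ref{symmsyst}, and then conclude by Gronwall. Subtracting the equation for $U^{\omega,n}$ (with residual $R$) from that of $U$ gives
\[
\mathcal{E}_t + A(U)\mathcal{E}_x + B(U)\mathcal{E}_y = \mathcal{F} - R,
\]
where $\mathcal{F} := -\bigl(A(U)-A(U^{\omega,n})\bigr)U^{\omega,n}_x - \bigl(B(U)-B(U^{\omega,n})\bigr)U^{\omega,n}_y$. Apply $\Lambda^\sigma$ and pair with $A_0(U)\Lambda^\sigma\mathcal{E}$; the symmetry of $A_1$ and $B_1$ together with integration by parts gives
\begin{align*}
\tfrac{1}{2}\tfrac{d}{dt}\langle A_0\Lambda^\sigma\mathcal{E},\Lambda^\sigma\mathcal{E}\rangle
&= \tfrac{1}{2}\langle (\partial_t A_0 + \partial_x A_1 + \partial_y B_1)\Lambda^\sigma\mathcal{E},\Lambda^\sigma\mathcal{E}\rangle \\
&\quad - \langle A_0\bigl([\Lambda^\sigma,A]\mathcal{E}_x + [\Lambda^\sigma,B]\mathcal{E}_y\bigr),\Lambda^\sigma\mathcal{E}\rangle + \langle A_0\Lambda^\sigma(\mathcal{F}-R),\Lambda^\sigma\mathcal{E}\rangle.
\end{align*}
Uniform positivity of $A_0$ on the region $G_2$ of Theorem \ref{thmmajda} makes this quadratic form equivalent to $\|\mathcal{E}\|_\sigma^2$.

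The decisive observation that produces the small coefficient $n^{\sigma+1-s}$ is that $\sigma>1$ enables the Sobolev embedding $H^{\sigma+1}\hookrightarrow C^1$ from Theorem \ref{sob}, and together with Corollary \ref{moresolsize} and Lemma \ref{u0bds} this yields
\[
\|U\|_{C^1} + \|U^{\omega,n}\|_{C^1} \lesssim \|U\|_{\sigma+1}+\|U^{\omega,n}\|_{\sigma+1} \lesssim n^{\sigma+1-s}.
\]
This small factor then dominates every $L^\infty$-norm of a derivative of $U$ appearing on the right. Hence the bulk term is $\lesssim n^{\sigma+1-s}\|\mathcal{E}\|_\sigma^2$. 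For the Kato--Ponce commutator \eqref{KPest} applied to $f = A(U) - A(0)$ and $g = \mathcal{E}_x$, the first factor gives $\|\partial A(U)\|_{L^\infty}\|\mathcal{E}\|_\sigma \lesssim n^{\sigma+1-s}\|\mathcal{E}\|_\sigma$, and the second gives $\|A(U)-A(0)\|_\sigma\|\mathcal{E}_x\|_{L^\infty} \lesssim 1\cdot n^{\sigma+1-s}$, where $\|A(U)-A(0)\|_\sigma \lesssim 1$ follows from Lemma \ref{sli-algebra} together with Lemma \ref{Reciprocal estimate} applied to the entry $(h_0+h)/(\rho_0+\rho)$, while $\|\mathcal{E}_x\|_{L^\infty}\lesssim n^{\sigma+1-s}$ by the same Sobolev embedding. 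The drift $\mathcal{F}$ is handled in the same spirit using the product rule and the bound $\|U^{\omega,n}\|_{\sigma+1}\lesssim n^{\sigma+1-s}$.

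The crux is the residual estimate $\|R\|_\sigma\lesssim n^{\delta-2}$. Computing $R_2, R_3$ explicitly from \eqref{Rdef} in terms of $u_1, u_2, v_1, v_2$, the construction \eqref{defu2}--\eqref{lowfreq} is arranged so that the $O(n^{-\delta-s})$ leading contributions to $\partial_t u_2$ and $\partial_t v_2$ cancel exactly against the low-frequency transport terms $v_1\partial_y u_2$ and $v_1\partial_y v_2$ on $\supp\psi$---thanks to the identities $\varphi_2\psi=\psi$ on $\supp\psi$ and $\varphi_1'\psi'=\psi'$ on $\supp\psi'$. The surviving terms either carry an extra factor $n^{-\delta}$ (from derivatives falling on the slow cutoff $\psi(n^{-\delta}\cdot)$ rather than on the oscillation $\sin(ny+\omega t)$) or come from quadratic cross-interactions like $u_2\partial_x u_1$ and $v_2\partial_y u_2$, all of which are handled via Lemma \ref{hs-estimates}. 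Putting everything together yields $\tfrac{1}{2}\tfrac{d}{dt}\|\mathcal{E}\|_\sigma^2 \lesssim n^{\sigma+1-s}\|\mathcal{E}\|_\sigma^2 + n^{\delta-2}\|\mathcal{E}\|_\sigma$, which gives the first claim. Since $\mathcal{E}(0) = 0$, Gronwall on $[0,T^*]$ delivers $\|\mathcal{E}(t)\|_\sigma \lesssim n^{\delta-2}/n^{\sigma+1-s} = n^{\delta-3+s-\sigma}$. I expect the main obstacle to be the residual estimate: identifying the designed cancellations among $\partial_t u_2 + v_1\partial_y u_2$ and $\partial_t v_2 + v_1\partial_y v_2$ and then tracking all surviving subleading and cross-terms in $H^\sigma$ requires careful bookkeeping with Lemma \ref{hs-estimates}.
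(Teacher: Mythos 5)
Your plan follows the paper's template --- symmetrized $H^\sigma$ energy estimate, Kato--Ponce commutator, the designed cancellation in $R$, and Gr\"onwall --- but there are two differences worth flagging, one of which is a genuine gap.

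First, the symmetrizer. You pair with $A_0(U)\Lambda^\sigma\mathcal{E}$, whereas the paper multiplies the error equation through by $A_0(U^{\omega,n})$ and pairs with $\Lambda^\sigma\mathcal{E}$. Because $\rho^{\omega,n}=h^{\omega,n}=0$, the matrix $A_0(U^{\omega,n})$ is \emph{constant} and diagonal: it commutes with $\Lambda^\sigma$ and has no time derivative, so the $\partial_t A_0$ term in your energy identity simply does not arise in the paper. Your route is workable (one bounds $\partial_t A_0(U)$ via the equation and the $C^1$ bound on $U$), but it adds a step the paper is organized precisely to avoid. Likewise, you put the matrix-difference terms into $\mathcal{F}=-(A(U)-A(U^{\omega,n}))U^{\omega,n}_x-\cdots$, while the paper keeps $A(U^{\omega,n})$ as the transport coefficient and packages the difference as $C(U)\mathcal{E}$; these are equivalent, and your bound $\|\mathcal{F}\|_\sigma\lesssim n^{\sigma+1-s}\|\mathcal{E}\|_\sigma$ matches the paper's estimate of \eqref{eq:gr1}.

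Second, and this is the gap: your Kato--Ponce accounting asserts $\|A(U)-A(0)\|_\sigma\|\mathcal{E}_x\|_{L^\infty}\lesssim 1\cdot n^{\sigma+1-s}$, a pure constant independent of $\|\mathcal{E}\|_\sigma$. After pairing with $\Lambda^\sigma\mathcal{E}$, this term contributes $\lesssim n^{\sigma+1-s}\|\mathcal{E}\|_\sigma$ to $\tfrac{d}{dt}\|\mathcal{E}\|_\sigma^2$, hence $n^{\sigma+1-s}$ as an inhomogeneous term in $\tfrac{d}{dt}\|\mathcal{E}\|_\sigma$. But for the allowed range $\sigma>\max\{1,s-2\}$ one has $\sigma+1-s>\delta-2$, so this \emph{dominates} the residual contribution $n^{\delta-2}$, and Gr\"onwall then yields $\|\mathcal{E}\|_\sigma\lesssim n^{\sigma+1-s}/n^{\sigma+1-s}=O(1)$ rather than the claimed $n^{\delta-3+s-\sigma}\to 0$. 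That weaker bound is useless for the interpolation in Lemma \ref{error}. The paper avoids this by bounding $\|\mathcal{E}_x\|_{L^\infty}$ by $\|\mathcal{E}\|_\sigma$ via Sobolev embedding, which keeps the Kato--Ponce second term proportional to $\|\mathcal{E}\|_\sigma$ and folds it into the $n^{\sigma+1-s}\|\mathcal{E}\|_\sigma^2$ term. You should do the same: do not trade $\|\mathcal{E}_x\|_{L^\infty}$ for the explicit $C^1$ size of $U$ and $U^{\omega,n}$; keep the dependence on $\|\mathcal{E}\|_\sigma$ so that the inhomogeneous term comes solely from $\|R\|_\sigma\lesssim n^{\delta-2}$. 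Your final displayed inequality is the right one, but as written your computations only produce the weaker inequality with an extra $n^{\sigma+1-s}$ inhomogeneity.
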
   
 
 \begin{proof}  
  An equation for the error  
 (the symmetric form of the system is useful here) is
\begin{align} \label{error-est} 
A_0 (U^{\omega,n} )  \mathcal{E} _t 
+ A_1(U^{\omega,n}) \mathcal{E} _x
+B_1 (U^{\omega,n})\mathcal{E} _y
+C(U ) \mathcal{E} 
+ A_0(U^{\omega,n}) R = 0
\end{align}
where 
\begin{align*}
C(U )   &
= A_0 (U ^{\omega, n} )  
 \left( \begin{array}{cccc}
u_x+v_y&  \rho_x & \rho _y  & 0 \\ 
 -  \frac{     h_0 \rho_x}{\rho \rho_0  + \rho_0^2 }  
 &u_x & u_y & \frac{\rho_x}{\rho+\rho_0} \\
 -  \frac{    h_0 \rho_y}{\rho \rho_0  + \rho_0^2 }  
 & v_x & v_y &   \frac{\rho_y}{\rho+\rho_0} \\
0 &h_x &  h_y &(\gamma-1) ( u_x + v_y) 
\end{array} \right)    
\\ & 
= 
 \left( \begin{array}{cccc}
\frac{ (u_x+v_y ) h_0  }{\rho_0   }&\frac{  h_0   }{    \rho_0    }  \rho_x
 &\frac{  h_0   }{    \rho_0    }  \rho _y  & 0 \\ 
 -  \frac{      h_0\rho_x}{\rho   + \rho_0  }  &  \rho_0u_x 
&   \rho_0u_y & \frac{     \rho_0\rho_x}{\rho+\rho_0} \\
 -  \frac{      h_0\rho_y}{\rho   + \rho_0  }  &  \rho_0 v_x 
&  \rho_0 v_y &   \frac{     \rho_0\rho_y}{\rho+\rho_0} \\
0 &\frac{      \rho_0   }{ (\gamma-1) h_0  }  h_x 
&\frac{      \rho_0   }{ (\gamma-1) h_0  }   h_y &\frac{     \rho_0 ( u_x + v_y)   }{  h_0   }   
\end{array} \right)   
\,.
\end{align*}
We write $C(U)\mathcal{E}$ as
\begin{align*}
C(U )   \mathcal{E}&
= 
 \left( \begin{array}{c }
\frac{ (u_x+v_y ) h_0  }{\rho_0   } E+ \frac{  h_0   }{    \rho_0    }  \rho_x
F+ \frac{  h_0   }{    \rho_0    }  \rho _y  G   \\ 
 -  \frac{    h_0\rho_x}{\rho    + \rho_0  }   E+ \rho_0u_x 
 F+   \rho_0u_y  G+ \frac{     \rho_0\rho_x}{\rho+\rho_0} H \\
 -  \frac{       h_0\rho_y}{\rho    + \rho_0  }   E+  \rho_0 v_x 
 F+ \rho_0 v_y G+   \frac{     \rho_0\rho_y}{\rho+\rho_0}H \\
   \frac{      \rho_0   }{ (\gamma-1) h_0  }  h_x F+
 \frac{      \rho_0   }{ (\gamma-1) h_0  }   h_y  G+ \frac{    
  \rho_0 ( u_x + v_y)   }{  h_0   }   H
\end{array} \right)  
=\begin{pmatrix} C_1\\C_2\\C_3\\C_4\end{pmatrix} 
.
\end{align*}
We apply the operator $\Lambda^\s$, where  $\s>1$ and
$s-2 < \s < s-1$, 
to the left hand side of \eqref{error-est} and then take the inner product 
with $\Lambda^\s \mathcal{E}$ to obtain  
\begin{align}
\langle \Lambda^{\sigma} \left( A_0(U^{\omega,n})\mathcal{E}_t \right), 
\Lambda^{\sigma} \mathcal{E} \rangle 
= 
& - \langle \Lambda^{\sigma} \left( C(U )\mathcal{E} \right), \Lambda^{\sigma} 
\mathcal{E} \rangle \label{eq:gr1}\\
& - \langle \Lambda^{\sigma} \left( \diag(A_1(U^{\omega,n}))\mathcal{E}_x + 
\diag(B_1(U^{\omega,n}))\mathcal{E}_y \right), \Lambda^{\sigma} \mathcal{E} \rangle 
\label{eq:gr2}\\
& -\langle \Lambda^{\sigma} \left( A_R(U^{\omega,n})\mathcal{E}_x + 
B_R(U^{\omega,n})\mathcal{E}_y \right), \Lambda^{\sigma} \mathcal{E} \rangle 
\label{eq:gr3}\\
& -  \langle \Lambda^{\sigma} A_0(U^{\omega,n}) R , \Lambda^{\sigma} \mathcal{E}  
\rangle \label{eq:gr4},
\end{align}
where $\diag(A)$ denotes the diagonal part of a matrix $A$ and $A_R=A - \diag(A)$. 
We now bound the terms on the right hand side.

\smallskip
\noindent
{\bf Estimate for \eqref{eq:gr1}.}
We have 
\begin{align}
\langle \Lambda^\s C(U)\mathcal E,\Lambda^\s\mathcal E\rangle
=\langle \Lambda^\s C_1,\Lambda^\s E\rangle+
\langle \Lambda^\s C_2,\Lambda^\s F\rangle+
\langle \Lambda^\s C_3,\Lambda^\s G\rangle+
\langle \Lambda^\s C_4,\Lambda^\s H\rangle\,.
\end{align}
These terms are all estimated in a similar way. 
The Cauchy-Schwarz inequality   yields 
$$|\langle \Lambda^\s C_1,\Lambda^\s E\rangle|\leq \|\Lambda^\s C_1\|_{L_2}
\|\Lambda^\s E\|_{L_2}
=  \|C_1\|_{\s}\|E\|_{\s}\leq \|C_1\|_{\s}\|\mathcal E\|_{\s}
$$
and so on for the other three terms.
To estimate $\|C_i\|_{\s}$, we note that all of the $C_i$ are
of the form
\begin{equation} \label{Cdef} 
C_i=a_1 E+a_2F+a_3G+a_4H\,,\end{equation}
where, up to constant multiples, each $a_j$ consists of a derivative, or sum
of derivatives, of components of $U$, in some cases divided by $\rho+\rho_0$.
So, taking $C_2$ as an example, and looking at the first summand, we have
\begin{equation} \label{a1}
\|a_1E\|_\s=h_0\left\|\frac{\rho_x}{\rho+\rho_0}E\right\|_\s\leq h_0
\left\|\frac{\rho_x}{\rho+\rho_0}\right\|_\s\|E\|_\s
\leq h_0 \left\|\frac{\rho_x}{\rho+\rho_0}\right\|_\s\|\mathcal E\|_\s\,,
\end{equation}
where we have used the algebra property, Lemma \ref{srp}.
Now we use Lemma \ref{Reciprocal estimate} to obtain
$$ \left\|\frac{\rho_x}{\rho+\rho_0}\right\|_\s\leq C(\s,\rho_0)
\big(1+\|\rho\|_\s^\s\big)\|\rho_x\|_\s \leq C \|\rho\|_{\s+1}\,,
$$
since $\|\rho_x\|_\s\leq \|\rho\|_{\s+1}$, and from Corollary \ref{moresolsize} 
we can absorb all the other factors into a
constant that depends on $\s$, $\rho_0$ and on the
$H^\s$ bound on $\rho$.
Finally, estimating $\|\rho\|_{\s+1}\leq Cn^{\s -s+1}$ as in Corollary \ref{moresolsize}, 
and treating the other terms in \eqref{Cdef} in the same way as \eqref{a1}, we have
$$
\langle \Lambda^\s C(U)\mathcal E,\Lambda^\s\mathcal E\rangle
  \le   C  n^{\s - s +1 } \|     \mathcal{E}  \|_{ \s } ^2 \,,
$$
with a constant $C$ that depends upon $\rho_0$, $h_0$, $\gamma$ and $\s$. 
(Since $\|U\|_{\s+1}$ decreases with $n$, we can eliminate the dependence of
the constant on $U$.)

\smallskip

\noindent{\bf Estimate of \eqref{eq:gr2}.} We have (up to a sign)
\begin{align*}
 \eqref{eq:gr2} 
&   = 
\langle \Lambda^{\sigma} \left( \diag(A_1(U^{\omega,n}))\mathcal{E}_x + \diag(B_1(U^{\omega,n}))\mathcal{E}_y \right), \Lambda^{\sigma} \mathcal{E} \rangle 
\\
& = 
\left \langle  
\Lambda^{\sigma}\left (  
\frac{ h_0}{ \rho_0}  u^{\omega,n}  E_x   + 
\frac{ h_0}{ \rho_0}  v^{\omega,n}  E_y  
 \right) 
,
\Lambda^{\sigma} E
\right\rangle _{L^2} 
 + 
\left \langle  
\Lambda^{\sigma} \left(  
 \rho_0  u^{\omega,n} F_x +  \rho_0  v^{\omega,n} F_y 
\right) 
,
\Lambda^{\sigma} F
\right\rangle _{L^2} 
\\ & + 
 \left\langle  
\Lambda^{\sigma} \left( 
 \rho_0  u^{\omega,n} G_x +  \rho_0  v^{\omega,n} G_y 
\right) 
,
\Lambda^{\sigma} G
\right\rangle _{L^2} 
 + 
 \left\langle  
\Lambda^{\sigma} \left( 
\frac{  \rho_0  u^{\omega,n} } {(\gamma-1) h_0 }  H_x 
+\frac{  \rho_0  v^{\omega,n} } {(\gamma-1) h_0 }  H_y 
\right) 
,
\Lambda^{\sigma} H
\right\rangle _{L^2}  .
\end{align*}
The eight terms in this expression are similar to each other; we show
how the first is estimated. 
Ignoring the constant $h_0/\rho_0$, consider 
$$
I_1  \equiv
\left \langle  
\Lambda^{\sigma}\left (  
u^{\omega,n}  E_x  
 \right) 
,
\Lambda^{\sigma} E
\right\rangle _{L^2}  = \int_{\rr^2}  \Lambda^\s \left (  
u^{\omega,n}  E_x    \right)\Lambda^{\sigma} E\, dxdy . 
$$
This can be written as (recall equation \eqref{commdef} for the definition of the commutator)
$$
I_1   = \int_{\rr^2}  \left( \left[ \Lambda^\s, \left (  
 u^{\omega,n} \right) \right]  E_x     +\left (  
 u^{\omega,n} \right)  \Lambda^\s \p_x E   \right) \Lambda^{\sigma} E\, dxdy . 
$$
We split this integral into two pieces, and apply the Cauchy-Schwarz estimate 
to the first term, to obtain 
$$
I_1   \le  
 \left\|  \left[ \Lambda^\s, \left (  
 u^{\omega,n} \right) \right]  E_x \right\|_{L^2} \| E\|_{\s}   
+
\left|\int_{\rr^2}   
 u^{\omega,n}  \Lambda^\s \p_x E    \Lambda^{\sigma} E\, dxdy \right|\,. 
$$
Now, the
Kato-Ponce commutator estimate, \eqref{KPest}, applied to the
first factor gives
\begin{align*}
\left\|  \left[ \Lambda^\s, \left (  
 u^{\omega,n} \right) \right]  E_x \right\|_{L^2} \leq &
 C(\s)\left( \|u_x^{\omega,n}\|_{L^\infty}\|\Lambda^{\s-1}E_x\|_{L_2}
 +\|\Lambda^\s u^{\omega,n}\|_{L_2}\|E_x\|_{L^\infty}\right)\\
\leq &C(\s)\left(\|u^{\omega,n}\|_{\s+1}\|\Lambda^\s E\|_{L_2}
 +\|u^{\omega,n}\|_\s\|E_x\|_{L^\infty}\right)\,,
 \end{align*} 
 using the Sobolev embedding theorem, Theorem \ref{sob}, which applies
 here since  $\s+1>2$.
 Since we can replace $\|u^{\omega,n}\|_\s$ by $\|u^{\omega,n}\|_{\s+1}$,
 and, using the same Sobolev embedding, replace $\|E_x\|_{L^\infty}$
 by $\|E\|_\s$, we obtain
 $$ \left\|  \left[ \Lambda^\s, \left (  
 u^{\omega,n} \right) \right]  E_x \right\|_{L^2} \leq C(s) \|u^{\omega,n}\|_{\s+1} \|E\|_\s^2
\,. $$
For the second term, integration by parts followed by H\"older's inequality yields 
\begin{align*}
\left|\int_{\rr^2}    
 u^{\omega,n}   \Lambda^\s \p_x E    \Lambda^{\sigma} E\, dxdy \right|
=&\left|\frac{1}{2}\int_{\rr^2}  
 u^{\omega,n}_x \big(\Lambda^\s E\big)^2  \, dxdy \right|
 \leq \frac{\|u^{\omega,n}_x\|_{L^\infty} }{2}\int_{\rr^2}  
 \big(\Lambda^\s E\big)^2  \, dxdy\\
= & \frac{1}{2}\|u^{\omega,n}_x\|_{L^\infty}\|E\|_\s^2   \,,
\end{align*}
and we get a bound similar to the first term, so that
$$
I_1   \le C
\left\|  
u^{\omega,n} \right\|_{\s+1}  \| E\|_{\s}   ^2 
 \le  Cn ^{\s+1-s} \| 
E\|_{\s}  ^2\,,
$$ 
from Corollary \ref{moresolsize}, with $C= C(\s)$.
Proceeding the same way with the other seven terms, we obtain
$$
\left|\langle \Lambda^{\sigma} \left( \diag(A_1(U^{\omega,n}))\mathcal{E}_x 
+ \diag(B_1(U^{\omega,n}))\mathcal{E}_y \right), \Lambda^{\sigma} \mathcal{E} \rangle 
\right| \leq Cn ^{\s+1-s} \| \mathcal E\|_{\s}^2,
$$
with the constant depending on on $\rho_0$, $h_0$, $\gamma$ and $\s$.

\smallskip
\noindent {\bf Estimate of \eqref{eq:gr3}.} 
Inserting the off-diagonal elements  of $A_1$ and $B_1$ from Section \ref{symmsyst}
(note that they are all constant
since $h^{\omega,n}=0=\rho^{\omega,n}$), we have 
\begin{align*}
- \eqref{eq:gr3}  
& = 
\left \langle  
\Lambda^{\sigma}\left (  
 h_0 F_x   +  h_0 G_y
 \right) 
,
\Lambda^{\sigma} E
\right\rangle _{L^2} 
  + 
\left \langle  
\Lambda^{\sigma} \left( 
 h_0  E_x +  \rho_0 H_x
\right) 
,
\Lambda^{\sigma} F
\right\rangle _{L^2} 
\\ & + 
 \left\langle  
\Lambda^{\sigma} \left( 
 h_0  E_y + \rho_0 H_y 
\right) 
,
\Lambda^{\sigma} G
\right\rangle _{L^2} 
 + 
 \left\langle  
\Lambda^{\sigma} \left(  \rho_0 F_x    + \rho_0 G_y 
\right) 
,
\Lambda^{\sigma} H
\right\rangle _{L^2}  .
\end{align*}
Writing the above as an integral and rearranging terms gives
\begin{align*}
- \eqref{eq:gr3}  
 = &
\int_{\rr^2} h_0\bigg(
\p_x\big(\Lambda^{\sigma}E\big) 
 \Lambda^{\sigma} F  + \Lambda^{\sigma} E\,\p_x
\big(\Lambda^{\sigma} F\big)+
\p_y\big(\Lambda^{\sigma}E\big) 
 \Lambda^{\sigma} G  + \Lambda^{\sigma} E\,\p_y
\big(\Lambda^{\sigma} G\big)\bigg)
\,dx\,dy 
\\ & + 
\int_{\rr^2} \rho_0\bigg(
\p_x\big(\Lambda^{\sigma}F\big) 
 \Lambda^{\sigma} H + \Lambda^{\sigma} F\,\p_x
\big(\Lambda^{\sigma} H\big)+
\p_y\big(\Lambda^{\sigma}G\big) 
 \Lambda^{\sigma} H  + \Lambda^{\sigma} G\,\p_y
\big(\Lambda^{\sigma} H\big)\bigg)
\,dx\,dy \\
=&
h_0\int_{\rr^2} 
\p_x\big(\Lambda^{\sigma}E \,
 \Lambda^{\sigma} F\big) +
\p_y\big(\Lambda^{\sigma}E\, 
 \Lambda^{\sigma} G \big) 
\,dx\,dy \\
& + 
\rho_0\int_{\rr^2} 
\p_x\big(\Lambda^{\sigma}F\,
 \Lambda^{\sigma} H\big) +
\p_y\big(\Lambda^{\sigma}G\, 
 \Lambda^{\sigma} H \big) 
\,dx\,dy 
\end{align*}
and therefore they all integrate to zero.  

\smallskip
\noindent{\bf Estimate of \eqref{eq:gr4}.} 
Since $A_0$ is diagonal and $A_0(U^{\omega,n})$ is constant, we have 
\begin{align*}
- \eqref{eq:gr4}   & =   \langle \Lambda^{\sigma} A_0(U^{\omega,n}) R , \Lambda^{\sigma} \mathcal{E}  \rangle  
  = 
\langle \rho_0\Lambda^\s R_2,\Lambda^\s F\rangle +
\langle \rho_0\Lambda^\s R_3,\Lambda^\s G\rangle \end{align*}
The Cauchy-Schwarz inequality   yields 
\begin{align*}
|\eqref{eq:gr4}| & \le \rho_0 \| R\|_{\s} \|\mathcal E\|_{\s} \,.
\end{align*}
 Combining the estimates for \eqref{eq:gr1}, \eqref{eq:gr2}, \eqref{eq:gr3} and \eqref{eq:gr4} we have 
\begin{align}
\langle \Lambda^{\sigma} \left( A_0(U^{\omega,n})\mathcal{E}_t \right), \Lambda^{\sigma} \mathcal{E} \rangle 
\le C
 n^{\s+1-s} \|  \mathcal{E} \|_{H^\s }^2 
 +C \| R\|_{\s} \|\mathcal E\|_{\s}  , \label{almost-gronwall}
\end{align} 
where the   constants depend upon $\rho_0, h_0, \gamma$ and $\s$.

We show that the residue $R$ satisfies the following estimate.
\begin{proposition}\label{residue-estimate}  If $ \max\{1,s-2\}< \s < s-1$, then 
$
\|R\|_\s \lesssim n^{\delta-2   }
$.
\end{proposition} 
\begin{proof}  \eqref{Rdef} the nonzero components of $R$ are
$$ 
\begin{pmatrix} 
R_2 \\
R_3  
\end{pmatrix}  
= 
\begin{pmatrix}
 \p_t u^{\omega, n} +  u^{\omega, n} \p_x u^{\omega, n}   + v^{\omega, n}  \p_y u^{\omega, n} \\
 \p_t v^{\omega, n} +  u \p_x v^{\omega, n} + v^{\omega, n}  \p_y v^{\omega, n}     
\end{pmatrix} \,  .
$$

\subsection{Estimating $R_2$} 
We have (omitting the superscripts for brevity)
\begin{align*} R_2=& u_t+uu_x+vu_y=
(u_1+u_2)_t+(u_1+u_2)(u_1+u_2)_x+(v_1+v_2)(u_1+u_2)_y\\
=&\overbrace{u_{2,t}}^{\fbox{0}}+\overbrace{u_1u_{1,x}}^{\fbox{1}}+
\overbrace{u_1u_{2,x}}^{\fbox{2}}
+ \overbrace{u_2u_{1,x}}^{\fbox{3}} +\overbrace{u_2u_{2,x}}^{\fbox{4}}
+\overbrace{v_1u_{1,y}}^{\fbox{5}}+
\overbrace{v_1u_{2,y}}^{\fbox{6}}+\overbrace{v_2u_{1,y}}^{\fbox{7}}+
\overbrace{v_2u_{2,y}}^{\fbox{8}}\,.
\end{align*}
Now, three of these terms are zero by design, since 
$\supp u_2=\supp v_2=\supp S$ and
$\phi_2'=0=\phi_2''$ for $y\in\supp S$:
\begin{align*} \fbox{2} &\equiv \frac{\omega}{n}\phi_1\left(\frac{x}{n^\delta}\right)
\phi_2'\left(\frac{y}{n^\delta}\right)\frac{1}{n^{\delta+s+1}}\p_{xy} S=0\\
\fbox{3}&\equiv u_2u_{1,x}=\frac{1}{n^{\delta+s+1}}\p_y S
\frac{\omega}{n^{1+\delta}}\phi_1'\left(\frac{x}{n^\delta}\right)
\phi_2'\left(\frac{y}{n^\delta}\right)=0\\
\fbox{7}&\equiv v_2u_{1,y} =-\frac{1}{n^{\delta+s+1}}\p_x S\left(-\frac{\omega}{n^{1+\delta}}
\right)
\phi_1\left(\frac{x}{n^\delta}\right)\phi_2''\left(\frac{y}{n^\delta}\right)=0\,;
\end{align*}
and another term takes a simpler form, since $\phi_1' \equiv 1\equiv \phi_2$ on
the support of $S$:
$$ \fbox{6}\equiv v_1u_{2,y} = -\frac{\omega}{n}
\phi_1'\left(\frac{x}{n^\delta}\right)\phi_2\left(\frac{y}{n^\delta}\right)\frac{1}{n^{\delta
+s+1}}\p_y S =-\frac{\omega}{n^{\delta+s+2}}\p_y^2 S\,.
$$
From the form of the low-frequency and high-frequency terms, it is clear that
differentiation of $u_1$ or $v_1$ with respect to either $x$ or $y$ improves
the result by a factor of $n^{-\delta}$, as does differentiation of $S$ with respect
to $x$; however, differentiation of $S$ with respect to $y$ introduces a term
with an additional multiplicative factor of $n$. 
The amplitudes of the low- and high-frequency terms have been balanced
so that the largest contributions due to this, in $\fbox{0}$ and $\fbox{6}$,
cancel each other. 
This is exhibited in the proof of
\begin{lemma} [Crucial cancellation]\label{crucial-cancel}
If $\varphi _1'\equiv 1$ on $\text{\emph{supp}}\,\psi'$ and
$\varphi _2\equiv 1 $   on  $\text{\emph{supp}}\,\psi$, then
\begin{align}\label{line1} \notag
u_{2,t}+v_1u_{2,y} \equiv& \frac{1}{n^{\delta +s+1}}\p_y\left(\p_t-\frac{\omega}{n}\p_y
\right) S \\=&
-\frac{\omega}{n^{2+2\delta+s}}
\psi\left(\frac{x}{n^\delta}\right)\left[
\frac{1}{n^\delta}\psi''\left(\frac{y}{n^\delta}\right)\sin(ny+\omega t)
+n\psi'\left(\frac{y}{n^\delta}\right)\cos(ny+\omega t)\right]\,,
\end{align}
and hence
$$ \left\|\fbox{\rm 0}+\fbox{\rm 6}\right\|_\s \leq Cn^{\s -\delta -s-1}\,.$$
\end{lemma}
\begin{proof} 
Using $
S(x,y,t) = \psi ( n^{-\delta } x) \psi(n^{-\delta} y) \sin( n y + \omega t)
$, 
we calculate
\begin{align*}
\left(\p_t-\frac{\omega}{n}\p_y\right) S=&\psi\left(\frac{x}{n^\delta}\right)
\left(\p_t-\frac{\omega}{n}\p_y\right)\left(\psi\left(\frac{y}{n^\delta}\right)\sin(ny+\omega t)
\right)\\
=&\psi\left(\frac{x}{n^\delta}\right)\left[\sin(ny+\omega t)\left(\p_t-\frac{\omega}{n}\p_y\right)
\psi\left(\frac{y}{n^\delta}\right)+\psi\left(\frac{y}{n^\delta}\right)
\left(\p_t-\frac{\omega}{n}\p_y\right)\sin(ny+\omega t)\right]\\
=&\psi\left(\frac{x}{n^\delta}\right)\left[\sin(ny+\omega t)\left(\p_t-\frac{\omega}{n}\p_y\right)
\psi\left(\frac{y}{n^\delta}\right)\right]\\
=&-\frac{\omega}{n^{1+\delta}}
\psi\left(\frac{x}{n^\delta}\right)\psi'\left(\frac{y}{n^\delta}\right)\sin(ny+\omega t)\,.
\end{align*}
From this we obtain \eqref{line1}.
Now it is a direct application of estimate \eqref{cos_estimate} to complete the proof. 
\end{proof}
To complete the estimate for the $H^\s$ norm of $R_2$,
we estimate the norms of $S$ and its derivatives.
From
$$ S=\psi\left(\frac{x}{n^\delta}\right)\psi\left(\frac{y}{n^\delta}\right)\sin(ny+\omega t)$$
and Lemma \ref{hs-estimates} we have $\|S\|_\s\lesssim n^{\s+\delta}$.
Since differentiation with respect to $x$ scales the expression by $n^{-\delta}$
and differentiation with respect to $y$ scales it by $n$ (where we ignore the
lower order contribution), we have
\begin{equation} \label{Sests}
  \| \p_x   S   \|_{\s} 
 \lesssim n^{\s}  \,,\quad
 \|  \p_y S   \|_{\s}  \lesssim  n^{ \s +\delta+1}  ,
\quad 
  \| \p_x  \p_y  S    \|_{\s}   
 \lesssim n^{ \s+1  }  ,
\quad 
   \|\p_y ^2   S   \|_{\s} \lesssim n^{\s+\delta +2} \,.
 \end{equation}
 We also note the $H^\s$ bounds on $u_1$ and $v_1$ and their derivatives:
 \[ \|u_1\|_\s =\left\|\frac{\omega}{n}\phi_1\left(\frac{x}{n^\delta}\right)
 \phi_2'\left(\frac{x}{n^\delta}\right)\right\|_\s
 \lesssim n^{\delta -1}\,,\quad  \|u_{1,x}\|_\s \lesssim \frac{1}{n}\,,
 \]
 and the same bounds hold for $v_1$ and for the $y$ derivatives.
With this we can find the remaining bounds for $R_2$:
\begin{align*}
\left\|\fbox{1}\right\|_\s&=\|u_1u_{1,x}\|_\s\lesssim n^{\delta -2}\,,\\
\left\|\fbox{4}\right\|_\s&=\|u_2u_{2,x}\|_\s=\frac{1}{(n^{\delta+s+1})^2}\|S_yS_{xy}\|_\s
\lesssim n^{-2s+2\s-\delta}\,,\\
\left\|\fbox{5}\right\|_\s&=\|v_1u_{1,y}\|_\s\lesssim n^{\delta -2}\,,\\
\left\|\fbox{8}\right\|_\s&=\|v_2u_{2,y}\|_\s=\frac{1}{(n^{\delta+s+1})^2}\|S_xS_{yy}\|_\s
\lesssim n^{-2s+2\s-\delta}\,.
\end{align*}
Combining this with Lemma \ref{crucial-cancel}, we find the $H_\s$ norm of $R_2$
to be bounded by $n^\alpha$ where
$$ \alpha = \max\{\delta-2, -2(s-\s)-\delta, \s -\delta -s-1\}\,.$$
Since $\s<s-1$, if we now choose $\delta \ll1$, the largest exponent is $\delta -2$, 
so we have
\begin{equation} \label{r2est}
\| R_2 \|_{\s}  \lesssim
 n^{  \delta -2} \, . 
\end{equation}

\subsection{Estimating $R_3$} 
This goes the same way.
(Again we omit the superscripts.)
\begin{align*} R_3=& v_t+uv_x+vv_y=
(v_1+v_2)_t+(u_1+u_2)(v_1+v_2)_x+(v_1+v_2)(v_1+v_2)_y\\
=&\overbrace{v_{2,t}}^{\fbox{0}}+\overbrace{u_1v_{1,x}}^{\fbox{1}}+
\overbrace{u_1v_{2,x}}^{\fbox{2}}
+ \overbrace{u_2v_{1,x}}^{\fbox{3}} +\overbrace{u_2v_{2,x}}^{\fbox{4}}
+\overbrace{v_1v_{1,y}}^{\fbox{5}}+
\overbrace{v_1v_{2,y}}^{\fbox{6}}+\overbrace{v_2v_{1,y}}^{\fbox{7}}+
\overbrace{v_2v_{2,y}}^{\fbox{8}}\,.
\end{align*}
Because $u_1$, $v_{1,x}$ and $v_{1,y}$ are zero on the support of $S$,
we find that the terms $\fbox{2}$, $\fbox{3}$ and $\fbox{7}$ are again zero
and (since $v_1$ is constant on $\supp S$), $\fbox{6}$ reduces to
$-\omega v_{2,y}/n$.
This again gives us a cancellation between the highest order terms in
$\fbox{0}$ and $\fbox{6}$ (we do not actually need it in the case of
$R_3$ since the largest terms are already smaller by a factor of $n$).
Specifically, using the identity in the proof of Lemma \ref{crucial-cancel},
\begin{align*}
 \fbox{0} + \fbox{6} &= v_{2,t}+v_1v_{2,y}=v_{2,t}-\frac{\omega}{n}v_{2,y}
=-\frac{1}{n^{\delta +s+1}}\p_x\left(\p_t-\frac{\omega}{n}\p_y\right)S\\
&=-\frac{1}{n^{\delta +s+1}}\p_x\left(
-\frac{\omega}{n^{1+\delta}}
\psi\left(\frac{x}{n^\delta}\right)\psi'\left(\frac{y}{n^\delta}\right)\sin(ny+\omega t)
\right)\\
&= \frac{1}{n^{3\delta +s+2}}
\psi'\left(\frac{x}{n^\delta}\right)\psi'\left(\frac{y}{n^\delta}\right)\sin(ny+\omega t)\,,
\end{align*}
and so
$$ \left\| \fbox{0} + \fbox{6}\right\|_\s\lesssim n^{-2\delta -s+\s-2}\,.$$
The estimates for the remaining terms are straightforward, as in the 
estimates for $R_2$.
We use \eqref{Sests} and we need also $\|S_{xx}\|_\s\lesssim n^{\s-\delta}$:
\begin{align*}
\left\|\fbox{1}\right\|_\s&=\|u_1v_{1,x}\|_\s\lesssim n^{\delta -2}\,,\\
\left\|\fbox{4}\right\|_\s&=\|u_2v_{2,x}\|_\s=\frac{1}{(n^{\delta+s+1})^2}\|S_yS_{xx}\|_\s
\lesssim n^{-2s+2\s-2\delta-1}\,,\\
\left\|\fbox{5}\right\|_\s&=\|v_1v_{1,y}\|_\s\lesssim n^{\delta -2}\,,\\
\left\|\fbox{8}\right\|_\s&=\|v_2v_{2,y}\|_\s=\frac{1}{(n^{\delta+s+1})^2}\|S_xS_{xy}\|_\s
\lesssim n^{-2s+2\s-2\delta-1}\,.
\end{align*}
Once again, the largest exponent is $\delta -2$, and so
\begin{align} \label{r3est}
\| R_3 \|_{\s}  & \lesssim 
 n^{ \delta -2  } \,. 
\end{align} 
 Combining estimates \eqref{r2est} and \eqref{r3est}
completes the proof of Proposition \ref{residue-estimate}.
\end{proof} 

To complete the proof of Theorem \ref{bigdeal},
first notice that from the definition $A_0 (U^{\omega,n} ) \ge c I$ 
for some positive constant $c$, and $A_0(U^{\omega,n})$ is a constant
matrix. 
Therefore, the $L^2$ inner product $\langle A_0 (U^{\omega,n} )  
V, V \rangle$ defines an equivalent norm. 
Thus,
\begin{align}
\frac{d}{dt} \| \mathcal{E}   \|_{\s}   ^2 & = \frac{d}{dt}
 \langle \Lambda^\s  \mathcal{E}   ,\Lambda^\s  \mathcal{E} \rangle
  \approx 
  \frac{d}{dt} \langle A_0 (U^{\omega,n} )  \Lambda^\s  \mathcal{E}   , 
   \Lambda^\s  \mathcal{E} \rangle
\end{align}
Applying the derivative, we have 
\begin{align} 
  \frac{d}{dt} \langle A_0 (U^{\omega,n} )   \Lambda^\s  \mathcal{E}     , 
   \Lambda^\s  \mathcal{E} \rangle
  &= 2
\langle A_0 (U^{\omega,n} )  \Lambda^\s  \mathcal{E}  _t   , 
 \Lambda^\s  \mathcal{E} \rangle
+ 
\langle A_0 (U^{\omega,n} ) _t  \Lambda^\s  \mathcal{E}   ,  \Lambda^\s  \mathcal{E} \rangle
\\ & = 
  2\Big\langle  \Lambda^\s  \Big( A_0 (U^{\omega,n} )   \mathcal{E}  _t  \Big) ,  
  \Lambda^\s  \mathcal{E}   \Big\rangle
 \notag
\end{align}
(since $ A_0 (U^{\omega,n} ) $ is constant). 
This quantity was estimated in Section \ref{section3.4}; substituting inequality 
\eqref{almost-gronwall} and applying Proposition \ref{residue-estimate} we have 
\begin{align} 
2\|\mathcal E\|_\s\frac{d}{dt}\|\mathcal E\|_\s\approx
  \frac{d}{dt} \langle A_0 (U^{\omega,n} )   
   \Lambda^\s  \mathcal{E}     ,  \Lambda^\s  \mathcal{E} \rangle
& \lesssim  
n^{\s+1-s} \|  \mathcal{E} \|_{H^\s }^2 
 + n^{\delta -2 } \|\mathcal E\|_{\s}  \, .   \label{eq:341} 
\end{align} 
Dividing by $\|\mathcal E\|_\s$ in \eqref{eq:341}
gives the first inequality stated in the theorem. 
We apply  
Gr\"onwall's inequality, \cite[p 24]{hartman}. 
The Gr\" onwall estimate for
$$ z'(t)\leq a z(t) + b\,,\quad z(0)=0\,,$$
is 
$$ z(t)\leq \frac{b}{a}\bigg( e^{at}-1\bigg)\,.$$
Since here  
$a\simeq n^{\s -s + 1}< C$, the upper bound for $e^{at}$ is a constant
that depends only on $T^*$, the time interval on which we are tracking the
solution, and with $b\simeq n^{\delta -2}$, then $b/a$ gives  the estimate in the theorem.
\end{proof} 
This completes the proof of the uniformity (in $n$) of the approximation of
$U^{\omega,n}$ to the actual solution, $U_{\omega,n}$, for $\omega = 1$
and $\omega = -1$.
For $\varepsilon$ in equation \eqref{eqapp} we have $3-(s-\s)-\delta>2$.
 
\subsection{Nonuniform convergence} \label{noncvge}
 We are now prepared to complete the proof of nonuniform convergence,
the final item, (\ref{nu}), in the program. 
We use a fact we proved in \cite{KeTi}:
 For a range of $\tau> s$, (specifically $s<\tau\leq \lfloor s\rfloor +1$, 
 where $\lfloor \cdot \rfloor$ is the greatest integer function),
the error in the $H^\tau$ norm is bounded by 
\begin{equation} \label{bignorm}
\| \mathcal E \|_{ {\tau} }  \lesssim n^{\tau - s} \,. 
\end{equation}
This uses the form of $\mathcal E$ and the bound in Lemma \ref{hs-estimates}.
Interpolation yields an estimate for the error in the $s$ norm. 
\begin{lemma}\label{error}
For any $s>2$ and $n\gg1$, there exists an $\ee>0$ such that  
\begin{equation}
\| \mathcal E \|_{ {s} }  \lesssim n^{-\ee}    .
\end{equation}
\end{lemma}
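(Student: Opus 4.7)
The plan is a straightforward two-scale interpolation. Theorem \ref{bigdeal} gives the bound $\|\mathcal{E}\|_\s \lesssim n^{\delta-3+s-\s}$ with a very negative exponent (since $\delta$ is small and $s-\s < 2$), while \eqref{bignorm} gives $\|\mathcal{E}\|_\tau \lesssim n^{\tau-s}$ with a small positive exponent. Because $s$ lies strictly between $\s$ and $\tau$, Proposition \ref{interp} furnishes a geometric mean, and the strategy is to show that a careful choice of parameters makes the resulting convex combination of exponents strictly negative.

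Concretely, I would fix $\s \in (\max\{1,s-2\}, s-1)$ and $\tau \in (s, \lfloor s\rfloor +1]$, and set $a = s - \s$ and $b = \tau - s$. Applying Proposition \ref{interp} componentwise to $\mathcal{E} = (E,F,G,H)^T$ (and reassembling via H\"older's inequality with exponents $1/\alpha$ and $1/\beta$, which works because $\alpha + \beta = 1$) yields
$$\|\mathcal{E}\|_s \le \|\mathcal{E}\|_\s^{\alpha}\|\mathcal{E}\|_\tau^{\beta}, \qquad \alpha = \frac{b}{a+b}, \quad \beta = \frac{a}{a+b}.$$
Substituting the two bounds and simplifying then gives $\|\mathcal{E}\|_s \lesssim n^{\gamma}$ with
$$\gamma = (\delta-3+a)\alpha + b\beta = \frac{b(2a+\delta-3)}{a+b}.$$

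The final step is to verify $\gamma<0$, which reduces to $a < (3-\delta)/2$, or equivalently $2(a-1)+\delta < 1$. Since $a>1$ is forced by $\s < s-1$, and $\delta<1$ was already taken small in the residue estimate, one needs only to choose $\s$ close enough to $s-1$ so that $a - 1$ is sufficiently small; then setting $\ee := -\gamma > 0$ completes the proof. The hard part is really only bookkeeping: checking that the constraints $\max\{1, s-2\} < \s < s-1$, $s<\tau\le\lfloor s\rfloor+1$, and $2(s-\s)+\delta<3$ are simultaneously satisfiable for every $s>2$. This is immediate, because $s-1 > \max\{1,s-2\}$ whenever $s>2$, so $\s$ may be taken arbitrarily close to $s-1$ while $\tau-s$ can be fixed at any convenient positive value in $(0,\lfloor s\rfloor +1-s]$.
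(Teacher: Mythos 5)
Your proposal is correct and follows essentially the same route as the paper: apply Proposition \ref{interp} to combine the $H^\s$ bound from Theorem \ref{bigdeal} with the $H^\tau$ bound \eqref{bignorm}, and then choose $\s$ close enough to $s-1$ (with $\delta$ small) to make the resulting exponent negative. Your added remark that the scalar interpolation estimate extends to the vector $\mathcal{E}$ via a componentwise application plus H\"older is a small point the paper does not spell out, but it is a straightforward verification and does not change the argument.
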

\begin{proof}
From Proposition \ref{interp} we have
\begin{equation}
\big\| \mathcal E\big \|_{ {s} }  \leq\big \| \mathcal E\big \|_ \s^{\alpha}  
\big\| \mathcal E\big \|_\tau^{\beta}, \quad \textrm{where}
\quad \alpha= \frac{ \tau -s}{\tau - \s}\,,\quad \beta =\frac{  s -\s }{\tau - \s} \,.
\end{equation}
Using Theorem \ref{bigdeal} and \eqref{bignorm} we find 
\begin{equation}
\| \mathcal E \|_{ {s} }  \lesssim \big( n^{ \delta-3+ s - \s   }\big) ^{\alpha }  
\big( n^{\tau-s }\big)^{\beta }  = n^{( \tau - s)   (\delta - 3 + 2s-2\s) / (\tau - \s)}   .
\end{equation}
By  choosing $\max\{1, s-3/2+\delta \} < \s < s-1$, we obtain $\delta - 3 + 2s-2\s < 0$,
which completes the proof.  
\end{proof}

We are now ready to give the proof of Theorem \ref{intro-thm}. 
\begin{proof}%
We estimate the difference between two actual solutions by the triangle inequality 
\begin{align}
&\|U_{1,n}  - U_{-1,n} \|_{ s} \ge 
\|U^{1,n}  - U^{-1,n} \|_{ s} 
-
\|U^{1,n}  - U_{ 1,n} \|_{ s} 
-
\|U^{-1,n}  - U_{-1,n} \|_{ s} 
.
\end{align}
From Lemma \ref{error}, the last two terms tend to zero as 
$n\rightarrow \infty$, and therefore, tracking the terms that
do not tend to zero as $n\to\infty$,
\begin{align*}
 \|U_{1,n}  - U_{-1,n} \|_{ s} 
& \notag
 \ge 
\liminf_{n\rightarrow \infty} \|U^{1,n}  - U^{-1,n} \|_{ s}  
\\
& \notag \ge\lim_{n\rightarrow \infty}  \| 
n^{-\delta -s}\psi(n^{-\delta} x)\psi(n^{-\delta} y)  \left( \cos( n y +  t) - 
 \cos( n y - t) \right) \|_{s} 
\\
& \notag = \lim_{n\rightarrow \infty}  \|
 n^{-\delta -s}\psi(n^{-\delta} x)\psi(n^{-\delta} y)    \cos( n y    )\|_{s} | \sin(t)| 
\approx  |\sin(t)|.
\qedhere
\end{align*}
\end{proof}
%
%

 \end{document}